\documentclass{amsart}
\usepackage{stmaryrd}
\usepackage{silence}
\usepackage{upgreek}
\usepackage{amssymb}
\usepackage{faktor}
\usepackage[all,cmtip]{xy}
\usepackage{amsmath} 
\usepackage{mathrsfs}
\usepackage{tikz}
\usepackage{tikz-cd}
\usepackage{enumitem}
\usepackage{mathtools}
\usepackage[mathcal]{euscript}
\usepackage{graphicx}
\usepackage{url}
\usepackage{hyperref}
\usepackage[T1]{fontenc}
\usepackage{subfigure}
%\usepackage{fancyhdr}
%\pagestyle{fancy}
%\lhead{Knudsen}
%\rhead{Farber's conjecture}

\usetikzlibrary{shapes.geometric}
\usetikzlibrary{decorations.markings}
\usetikzlibrary{patterns,decorations.pathreplacing}

\hypersetup{colorlinks=true,linkcolor=blue}

\pagestyle{plain}
\usepackage{ragged2e}

\newcommand{\D}{\mathcal{D}}

\definecolor{coloryellow}{RGB}{240,228,66}
\definecolor{colorskyblue}{RGB}{86,180,233}
\definecolor{colorvermillion}{RGB}{213,94,0}

%\newcommand{\st}[1]{\mathrm{st}_{#1}}

% ringed characters

% Blackboard Bold faces

% Bold faces

% Calligraphic faces

\newcommand{\cC}{\mathcal{C}}

% euscript calligraphic faces

% sans serif faces

%\newcommand{\sTheta}{\mathsf{\Theta}}
\DeclareSymbolFont{sfletters}{OT1}{cmss}{m}{n}
\DeclareMathSymbol{\sTheta}{\mathord}{sfletters}{"02}

% typewrite faces

% Fraktur faces

\theoremstyle{definition}
\newtheorem{definition}{Definition}[section]

\newtheorem{example}[definition]{Example}

\newtheorem{construction}[definition]{Construction}

\theoremstyle{plain}
\newtheorem{proposition}[definition]{Proposition}
\newtheorem{lemma}[definition]{Lemma}
\newtheorem{corollary}[definition]{Corollary}
\newtheorem{theorem}[definition]{Theorem}
\newtheorem{conjecture}[definition]{Conjecture}

\theoremstyle{remark}
\newtheorem{remark}[definition]{Remark}

\makeatletter
\@addtoreset{definition}{section}
\makeatother

\usepackage{marginnote}
    \DeclareFontFamily{U}{wncy}{}
    \DeclareFontShape{U}{wncy}{m}{n}{<->wncyr10}{}
    \DeclareSymbolFont{mcy}{U}{wncy}{m}{n}
    \DeclareMathSymbol{\Sha}{\mathord}{mcy}{"58}

%slantbox{x}{y}{BOX} maps (1,0) (horizontal axis) to (x,y) and fixes (0,1) (vertical axis)
\newsavebox{\foobox}

\newcommand{\TGE}{\mathcal{TGE}}

\newcommand{\GE}{\mathcal{GE}}
\newcommand{\GM}{\mathcal{GM}}
\newcommand{\TGI}{\mathcal{TGI}}
\newcommand{\GTM}{\mathcal{GTM}}
\newcommand{\CFE}{\mathcal{CFE}}

\title{Robertson's conjecture and universal finite generation in the homology of graph braid groups}
\author{Ben Knudsen and Eric Ramos}
%\email{b.knudsen@northeastern.edu}
%\address{Department of Mathematics, Northeastern University, Boston, MA 02115, USA}
%\keywords{Configuration spaces, topological complexity, braid groups, graphs}

\begin{document}

\begin{abstract}
We formulate a categorification of Robertson's conjecture analogous to the categorical graph minor conjecture of Miyata--Proudfoot--Ramos. We show that these conjectures imply the existence of a finite list of atomic graphs generating the homology of configuration spaces of graphs---in fixed degree, with a fixed number of particles, under topological embeddings. We explain how the simplest case of our conjecture follows from work of Barter and Proudfoot--Ramos, implying that the category of cographs is Noetherian, a result of potential independent interest.
\end{abstract}

\maketitle

\section{Introduction}

The homology of configuration spaces of graphs has been a subject of significant interest in recent years. One important guiding principle in this study has been the concept of \emph{universal generation} (see, e.g., \cite[Rmk. 3.14]{AnDrummondColeKnudsen:ESHGBG}). Write $F_n(G)$ for the configuration space of $n$ ordered points in the graph $G$.

\begin{definition}\label{def:ufg}
Fix a class of graphs $\mathcal{G}$ and natural numbers $i$ and $n$. We say that \emph{universal finite generation} holds for $(\mathcal{G}, i, n)$ if there is a finite subset $\mathcal{G}_0\subseteq \mathcal{G}$ such that, for any $G\in\mathcal{G}$, the group $H_i(F_n(G))$ is generated by classes arising from topological subgraphs homeomorphic to members of $\mathcal{G}_0$. If universal finite generation holds for $(\mathcal{G},i,n)$ for every $i$ and $n$, then we say simply that universal finite generation holds for $\mathcal{G}$.
\end{definition}

\begin{conjecture}\label{conjecture:ufg}
Universal finite generation holds for the class of all graphs.
\end{conjecture}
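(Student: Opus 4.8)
The plan is to deduce Conjecture~\ref{conjecture:ufg} from a categorification of Robertson's conjecture, parallel to the categorical graph minor conjecture of Miyata--Proudfoot--Ramos, by reorganizing the homology of graph configuration spaces into a module over a category of embeddings. First I would fix the relevant category $\E$: its objects are finite graphs, and a morphism $G'\to G$ is an isotopy class of topological embeddings $G'\hookrightarrow G$. Because a homeomorphism is in particular a topological embedding with a topological-embedding inverse, every subdivision of a graph is isomorphic to it in $\E$, so isomorphism classes in $\E$ are precisely homeomorphism classes of graphs; moreover each Hom-set is finite, since an isotopy class of topological embedding is pinned down by a finite amount of combinatorial data (which subdivided subgraph of $G$ carries the image, and how). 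A topological embedding $G'\hookrightarrow G$ induces $F_n(G')\to F_n(G)$ and hence a homomorphism $H_i(F_n(G'))\to H_i(F_n(G))$ depending only on its isotopy class, so $M_{i,n}\colon G\mapsto H_i(F_n(G))$ is a functor $\E\to\mathrm{Ab}$, i.e.\ an $\E$-module. The observation that drives everything is that universal finite generation for $(\{\text{all graphs}\},i,n)$ is equivalent to finite generation of $M_{i,n}$ as an $\E$-module: given finitely many generating classes $x_j\in H_i(F_n(G_j))$, the finite set $\G_0$ of homeomorphism classes of the $G_j$ works, since each $x_j$ is carried into $H_i(F_n(G))$ along a topological embedding whose image is a topological subgraph homeomorphic to $G_j$; conversely a generating family $\G_0$ yields a finite generating set for $M_{i,n}$ because $H_i(F_n(G_0))$ is a finitely generated abelian group for each fixed $G_0$.

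The problem is thereby reduced to proving $M_{i,n}$ finitely generated, and here I would separate a homological input from a combinatorial one. For the homological input I would take an explicit combinatorial model for the chains on $F_n(G)$ --- the \'Swi\k{a}tkowski complex, or the local cellular models of An--Drummond-Cole--Knudsen --- and verify that, as $G$ ranges over $\E$, these assemble (possibly after a strictly functorial, homotopy-equivalent replacement) into a chain complex of $\E$-modules whose terms are finitely generated. This ought to follow from the ``local'' character of the generators: with only $n$ particles available, each generating state is supported on a union of boundedly many vertex-stars, and up to homeomorphism --- that is, after collapsing the subdivisions that $\E$ makes invisible --- there are only finitely many such local configurations, each contributing a representable summand. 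For the combinatorial input I would formulate the \emph{categorified Robertson conjecture}: the category $\E$ --- equivalently, its skeleton, the category whose objects are homeomorphism classes of graphs and whose morphisms $[G']\to[G]$ are the topological subgraphs of $G$ homeomorphic to $G'$ --- is Noetherian, in the sense that submodules of finitely generated $\E$-modules are again finitely generated. Granting this, $M_{i,n}$, being the homology of a complex of finitely generated $\E$-modules, is a subquotient of a finitely generated module, hence finitely generated, and the deduction is complete.

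The step I expect to be the real obstacle is the Noetherianity of $\E$. This is a genuine strengthening of the classical picture: Robertson's conjecture, like the Robertson--Seymour graph minor theorem, is a statement about a quasi-order on graphs, whereas $\E$ additionally records the many ways one graph can sit topologically inside another, and it is precisely this extra bookkeeping that must be controlled. I do not expect to prove it in general; the realistic plan is to isolate it as a conjecture, record the formal deduction above, and then verify special cases by quasi-Gr\"obner techniques in the style of Sam--Snowden and Proudfoot--Ramos --- in particular extracting from the work of Barter and of Proudfoot--Ramos that the full subcategory of $\E$ on cographs is Noetherian, which already establishes universal finite generation for the class of cographs. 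Secondary technical matters that will need attention: confirming that composition, identities, and the behaviour of subdivisions in $\E$ are all well posed; reconciling the variance of the chosen chain model with that of $\E$ and strictifying it into an honest complex of functors; and making the ``finitely many local configurations'' statement sharp enough to conclude that each term is genuinely a finite direct sum of representable $\E$-modules.
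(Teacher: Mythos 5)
You are proving a statement that the paper itself leaves as a conjecture and establishes only conditionally (Theorem \ref{thm:main}); your plan is likewise a conditional reduction, so the question is whether the hypothesis you reduce to is tenable. It is not. You propose to conjecture that the whole category $\E=\TGI$ of topological graphs and isotopy classes of embeddings is Noetherian, and to call this the ``categorified Robertson conjecture.'' But Robertson's conjecture is emphatically \emph{not} the statement that the topological minor (or topological subgraph) quasi-order is a well-quasi-order---that is false, and the standard counterexample kills your category as well. Let $R_k'$ denote the $k$th Robertson chain with three leaves attached at each end; the family $\{R_k'\}_{k\geq 1}$ is an antichain for the topological minor relation, hence a fortiori for topological subgraph containment. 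Now consider the finitely generated free $\TGI$-module $\TGI(\mathrm{pt},\bullet)$, where $\mathrm{pt}$ is a one-point graph, and its submodule $N$ with $N(G)=A\langle\pi_0(G)\rangle$ when $G$ contains some $R_k'$ as a topological subgraph and $N(G)=0$ otherwise. If $N$ were generated by objects $H_1,\dots,H_m$, then each $H_i$ with $N(H_i)\neq 0$ contains some $R_{k_i}'$, and for every large $K$ some $H_i$ would have to embed in $R_K'$, forcing $R_{k_i}'$ to be a topological minor of $R_K'$ and hence $k_i=K$---impossible for infinitely many $K$. So $\TGI$ is not Noetherian over any nonzero ring, and your reduction is to a false statement.

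The fix is to restrict, as the paper's Conjecture \ref{conjecture:crc} does, to the subcategories $\GTM_k$ of graphs excluding the Robertson chain $R_k$ as a topological minor (with labels in a well-quasi-order), mirroring the actual content of Robertson's conjecture. This costs something real: finite generation over each $\GTM_k$ (Theorem \ref{thm:truncated fg}) does not by itself yield finite generation over $\GTM$ or $\TGI$, and bridging that gap requires a second conjectural input that your proposal does not anticipate. The paper filters $H_i(F_n(\bullet))$ both by the submodules generated by the objects of $\GTM_k$ and by first Betti number; the Betti filtration is finite assuming the categorical graph minor conjecture, via the exceptional functoriality for edge contractions (Proposition \ref{prop:contractions} and Corollary \ref{cor:finite filtrations}), and since the $g$th Betti stage is contained in the $(g+1)$st Robertson stage (Lemma \ref{lem:filtration inclusions}), the Robertson filtration stabilizes as well. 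That sandwich is the missing idea. Your remaining ingredients---the reformulation of universal finite generation as finite generation of the $\TGI$-module $H_i(F_n(\bullet))$, the use of a finitely generated cellular chain model, and Gr\"obner-theoretic verification of base cases such as cographs---do match the paper in spirit, though note that the paper's Abrams-model argument must be run in $\GTM_k$, where subdivisions are available as morphisms, and that its cograph category $\CFE$ consists of \emph{full} embeddings and is not a full subcategory of $\TGI$.
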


This conjecture can be viewed as a statement of finite generation over a certain category of graphs (Conjecture \ref{conjecture:main} below), a reformulation locating it within the realm of categorical Noetherianity results. Categorifications of Higman's lemma \cite{sam} and Kruskal's tree theorem \cite{Barter,PR-genus} are known, and the categorical graph minor conjecture of \cite{MPR,MR} categorifies the theorem of Robertson--Seymour \cite{RSXX}. In this vein, we formulate a categorification of Robertson's conjecture \cite{LT} (Conjecture \ref{conjecture:crc} below), which we relate to universal finite generation.

\begin{theorem}\label{thm:main}
The categorical graph minor conjecture and the categorical Robertson conjecture together imply Conjecture \ref{conjecture:ufg}.
\end{theorem}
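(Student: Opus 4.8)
The plan is to recast Conjecture~\ref{conjecture:ufg} as a finite generation statement for a single functor on a category of graphs, and then to feed in the two categorical conjectures. First I would fix the homological degree $i$ and the particle number $n$ and, via the reformulation recorded in Conjecture~\ref{conjecture:main}, reduce to proving that the functor $G\mapsto H_i(F_n(G))$---defined on the category of graphs and homotopy classes of topological embeddings---is finitely generated. The overall shape is the usual one for Noetherianity arguments: present this functor as a subquotient of a functor assembled from finitely many representables over a suitable subcategory, and then conclude from Noetherianity of that subcategory.

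For the presentation I would use the local structure theory of configuration spaces of graphs, in the form of the small \'Swi\k{a}tkowski-type chain models and the edge-stabilization results of An, Drummond-Cole, and Knudsen. These show that every class in $H_i(F_n(G))$ is a sum of images of classes carried by topological subgraphs of controlled complexity---boundedly many essential vertices of bounded valence carrying boundedly many particles, with all bounds depending only on $i$ and $n$---which, after we restrict to graphs of bounded valence (a reduction licensed by the same structure theory, since a class only ever sees bounded-valence subgraphs), realizes $H_i(F_n(-))$ as a quotient of a sub-functor of $\bigoplus_{\Gamma_0}\mathbb{Z}[\Hom(\Gamma_0,-)]$ with $\Gamma_0$ ranging over a finite set of ``state graphs.'' The delicate point is that a topological embedding induces only a \emph{subdivisional} quasi-isomorphism of chain complexes, so this is a presentation modulo the relations coming from subdivision rather than a genuine free resolution; the categorical graph minor conjecture of Miyata--Proudfoot--Ramos is what organizes the residual minor-theoretic combinatorics of the ambient graphs---their branching and genus data, left over after the local reduction---into a Noetherian category, so that these relations, and hence the subquotient, are finitely controlled.

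Finally I would invoke the categorical Robertson conjecture (Conjecture~\ref{conjecture:crc}) to upgrade ``generated under the minor relation'' to ``generated under topological embedding,'' which is exactly the relation appearing in Definition~\ref{def:ufg}. Since the homology of configuration spaces is functorial for topological embeddings but not for edge contractions, this passage is legitimate only in a regime where the topological minor relation is itself a well-quasi-order---graphs of bounded valence, i.e.\ graphs with no large substar---and endowing that regime with a categorical Noetherianity statement is precisely the content of the categorical Robertson conjecture. Combining its Noetherianity with the subdivision-controlled finite presentation of the previous step yields that $H_i(F_n(-))$ is finitely generated, hence universal finite generation for $(\text{all graphs},i,n)$; as $i$ and $n$ were arbitrary, Conjecture~\ref{conjecture:ufg} follows. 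I expect the main obstacle to be the interface between the inputs: the two categorical conjectures assert Noetherianity of graph categories under two different morphism classes, and one must build a single intermediate category---plausibly the one underlying Conjecture~\ref{conjecture:crc}---on which $H_i(F_n(-))$ is functorial, on which the bounded-valence reduction is available, and to which both conjectures apply, and then verify that Noetherianity of that category together with the subdivisional relations genuinely forces finite generation of the functor rather than merely finite presentation.
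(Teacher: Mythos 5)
There is a genuine gap, and it lies exactly where you predicted: at the interface between the two hypotheses. The central difficulty is that the topological minor relation admits infinite antichains (the decorated Robertson chains), so no category of \emph{all} graphs under topological minor morphisms or embeddings can be Noetherian; any argument that presents $H_i(F_n(\bullet))$ as a subquotient of a finitely generated functor and then "concludes from Noetherianity of that subcategory" must say which subcategory, and must separately explain why that subcategory generates everything. The paper resolves this with two nested filtrations of $H_i(F_n(\bullet))$ as a $\GTM$-module: the Robertson filtration $R_kH_i(F_n(\bullet))$ (generated by graphs excluding the Robertson chain $R_k$ as a topological minor) and the Betti filtration $B_gH_i(F_n(\bullet))$, with $B_g\subseteq R_{g+1}$ (Lemma \ref{lem:filtration inclusions}). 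Conjecture \ref{conjecture:crc} yields finite generation over each $\GTM_k$ (Theorem \ref{thm:truncated fg}, via Abrams' discretized model and a careful subdivision argument), while Conjecture \ref{conjecture:cgmt} yields finiteness of the Betti filtration (Corollary \ref{cor:finite filtrations}), hence that a single $k$ suffices. Your proposal contains neither the filtration nor any substitute for it.

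Relatedly, your division of labor between the two conjectures is essentially reversed. You invoke the categorical graph minor conjecture to control the subquotient and the categorical Robertson conjecture to "upgrade generation under minors to generation under embeddings." In the paper it is the Robertson conjecture that supplies the Noetherianity controlling the subquotient (homology of the finitely generated chain module over $\GTM_k$), whereas the passage from minor-generation to embedding-generation is elementary: every morphism in $\GM$ factors through a $\GE$-embedding out of a graph of the same first Betti number, so the graph minor conjecture is used only to truncate the Betti, hence the Robertson, filtration. Two further inaccuracies would derail the argument as written. First, the regime of Robertson's conjecture is exclusion of the Robertson chains $R_k$ (a family of growing first Betti number), not bounded valence; $\GTM_k$ contains stars of arbitrarily large valence, and no bounded-valence reduction appears or is needed. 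Second, the input you attribute to edge stabilization---that every class is carried by a subgraph of complexity bounded in terms of $i$ and $n$ alone---is essentially the conclusion to be proved; what the cellular models actually provide is only that the \emph{chain} groups are finitely generated (by disjoint unions of $i$ edges and $n-i$ vertices, in Abrams' model), and one needs the conjectural Noetherianity to transfer this to homology.
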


\begin{remark} For simplicity, we work throughout with ordered configuration spaces, but the obvious unordered analogue of Definition \ref{def:ufg} is of equal interest, the analogue of Conjecture \ref{conjecture:ufg} equally plausible, and the analogue of Theorem \ref{thm:main} true, with the same proof.
\end{remark}

Briefly, paired with the pioneering work of Abrams \cite{A}, the categorical Robertson conjecture guarantees universal finite generation for graphs whose complexity is bounded in a certain sense. The theorem follows after showing that the resulting filtation by this notion of complexity is finite; in fact, it suffices to establish finiteness of the related filtration by first Betti number, which follows from the categorical graph minor conjecture. 

The simplest case of the categorical Robertson conjecture is already implicit in work of Barter and Proudfoot--Ramos (see Theorem \ref{thm:genus one} below), and this case implies universal finite generation for a large class of graphs including all complete and complete bipartite graphs (see Definition \ref{def:cograph} below).

\begin{theorem}\label{thm:cograph ufg}
Universal finite generation holds for the class of cographs.
\end{theorem}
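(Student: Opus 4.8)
The plan is to deduce Theorem~\ref{thm:cograph ufg} from Theorem~\ref{thm:genus one} — the simplest case of the categorical Robertson conjecture, implicit in the work of Barter \cite{Barter} and Proudfoot--Ramos \cite{PR-genus} — in two steps. First, I would translate that case into the assertion that the category of cographs under topological embeddings is Noetherian. Second, I would feed this Noetherianity into the machinery relating the homology of $F_n(-)$ to finite generation over categories of graphs, as in the proof of Theorem~\ref{thm:main}; because Noetherianity of the cograph category is established outright, the filtration arguments there that invoke the categorical graph minor conjecture are bypassed, and the conclusion is unconditional.

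I would begin by fixing the indexing category. Let $\mathsf{CoGr}$ be the full subcategory on the cographs of the category of graphs and topological embeddings appearing in Conjecture~\ref{conjecture:main}. The evident restriction of that reformulation says that universal finite generation for $(\mathcal{G}, i, n)$ with $\mathcal{G}$ the class of cographs, in the sense of Definition~\ref{def:ufg}, is equivalent to finite generation of the functor $G \mapsto H_i(F_n(G))$ on $\mathsf{CoGr}$, a finite generating set of objects furnishing a valid $\mathcal{G}_0$. So it suffices to prove (a) that $\mathsf{CoGr}$ is Noetherian and (b) that $G \mapsto H_i(F_n(G))$ is a subquotient of a finitely generated functor on $\mathsf{CoGr}$.

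Step (a) is the heart of the argument, and the place where I expect the main obstacle. I would exploit the recursive structure of cographs: each cograph $G$ is built from points by iterated disjoint union and join, and is faithfully recorded by its cotree — a finite rooted tree with leaves the vertices of $G$ and internal nodes labeled $\sqcup$ or $\ast$. The aim is to relate $\mathsf{CoGr}$, via the cotree construction, to a category of labeled rooted trees for which Theorem~\ref{thm:genus one} yields Noetherianity, in such a way that Noetherianity passes back to $\mathsf{CoGr}$. The subtlety is that a topological embedding $G \hookrightarrow G'$ of cographs is not a map of cotrees on the nose: a topological, hence possibly subdivided, subgraph of a cograph is generally not an induced subgraph, and its topological type need not be a cograph at all. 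One must therefore combine this with the locality of configuration-space homology (see \cite{A,AnDrummondColeKnudsen:ESHGBG}) to reduce the generation problem to classes supported on small subgraphs of cograph type, and then check that the embeddings needed to generate those are witnessed by honest cotree morphisms of the kind controlled by \cite{Barter,PR-genus}. Reconciling the modular-decomposition combinatorics of cographs with the subdivisions inherent in topological subgraphs is the delicate point.

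Step (b) is the routine half. By Abrams' discretized model \cite{A}, $F_n(G)$ is homotopy equivalent to a cube complex whose cellular chains form a functor in $G$ that is finitely generated in each homological degree — its generators indexed by the bounded local data at essential vertices — over graphs with topological embeddings, and hence over $\mathsf{CoGr}$. Since $H_i(F_n(-))$ is a subquotient of this functor and $\mathsf{CoGr}$ is Noetherian by step (a), it too is finitely generated on $\mathsf{CoGr}$. Unwinding the reformulation produces a finite set $\mathcal{G}_0$ of cographs whose topological copies generate $H_i(F_n(G))$ for every cograph $G$; since $i$ and $n$ were arbitrary, universal finite generation holds for the class of cographs. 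In particular it holds for all complete and all complete bipartite graphs, these being cographs (Definition~\ref{def:cograph}).
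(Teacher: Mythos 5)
Your proposal has two genuine gaps, both at the points you yourself flag as delicate, and the paper's proof avoids both by a different choice of category and of cellular model. First, you index over cographs with \emph{topological} embeddings and then try to transport Noetherianity from the cotree category; but the cotree correspondence only matches morphisms that are \emph{full} (induced-subgraph) embeddings, and as you note, a topological subgraph of a cograph need not be a cograph at all. Your proposed repair---``combine with the locality of configuration-space homology to reduce to classes supported on small subgraphs of cograph type''---is not carried out and is not needed: the paper simply works in the category $\CFE$ of cographs and full embeddings, proves that $\CFE$ is equivalent to the category of cotrees and hence Noetherian (Theorem \ref{thm:cographs}, via Theorem \ref{thm:genus one} and \cite[Prop.~4.4.2]{sam}), and observes that finite generation over $\CFE$ is \emph{stronger} than what Definition \ref{def:ufg} asks for, since full embeddings are in particular topological embeddings. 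There is no need to establish Noetherianity for the larger topological-embedding category.

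Second, your step (b) invokes Abrams' discretized model, but that model computes $H_i(F_n(G))$ only when $G$ is sufficiently subdivided for $n$, and no nontrivial cograph is (subdividing an edge creates an induced $P_4$, leaving the class of cographs; e.g.\ $K_m$ has $3$-cycles). This is exactly why the proof of Theorem \ref{thm:truncated fg} needs subdivision morphisms, which are unavailable in $\CFE$. The paper instead uses the \'{S}wi\k{a}tkowski model (Theorem \ref{thm:swiatkowski}), which requires no subdivision; its $i$-cells $\lambda\in A_{i,n}(G)$ are each supported on an \emph{induced} subgraph $G_\lambda$ with at most $2n$ vertices, and induced subgraphs of cographs are cographs. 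That closing step is what makes the generation argument terminate, and it has no analogue in your outline. To repair your proposal you would need to replace both the indexing category and the cellular model along these lines.
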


Key to this result is a Noetherianity theorem for the category of cographs (Theorem \ref{thm:cographs}), a result of potential independent interest.

To some readers, Conjecture \ref{conjecture:ufg} may appear quite bold. For these readers, it may be helpful to recall that we are exploring finiteness questions for \emph{$i$ and $n$ fixed}---although related questions where this is not so are also interesting. Heuristically, we are supposing that there is a limit to the complexity of an $i$-dimensional shape described by $n$ particles confined to a graph, which perhaps sounds plausible enough. A better reason to believe the conjecture is that it is supported by calculations; for example, for every $n$, universal finite generation is known for $i=1$ and the class of all graphs \cite{KP}, for $i=2$ and the class of planar graphs \cite{AnKnudsen:OSHPGBG}, and for all $i$ and the classes of trees with loops \cite{CL} and wheel graphs \cite{MaciazekSawicki:NAQSG} (we elide the distinction between ordered and unordered configurations).

To other readers, the question of generation by subgraphs may seem limiting---why not allow minors as well? A first answer is that this weaker generation question is already well studied \cite{MPR}. A more substantive answer is that the minor relation is a \emph{combinatorial} feature of graphs, where the subgraph relation is \emph{topological}. It is natural to consider the finite generation question in topological terms because the configuration spaces themselves are topological invariants; indeed, it is far from obvious that there is any functoriality for minors to ask about \cite{ADK}.

\subsection{Conventions} All rings are commutative with unit. Homology is taken with integer coefficients, although this assumption is not essential. All categories are essentially small. We say that a functor $F$ \emph{reflects} a property of an object or an arrow in the source provided the property holds if it holds after applying $F$.

\subsection{Acknowledgements} The present paper grew out of conversations at the AIM workshop ``Configuration Spaces of Graphs'' and the conference ``Compactifications, Configurations, and Cohomology'' at Northeastern University. The first author owes his interest in the problem of universal finite generation to Byunghee An and Gabriel Drummond-Cole. He was supported by NSF grant DMS-1906174. The second author would like to send his thanks to Chun-Hung Liu for conversations related with this project. He was supported by NSF grant DMS-2137628.

\section{Categorical modules}

In this section, we review the elementary categorical module theory used in the remainder of the paper. Throughout this section, we fix a ring $A$.

\begin{definition}
Let $\cC$ be a category. A $\cC$-\emph{module} (over $A$) is a functor from $\cC$ to the category of $A$-modules. A \emph{map of $\cC$-modules} is a natural transformation of functors. 
\end{definition}

The resulting category of $\cC$-modules is Abelian, with all relevant concepts (injections, surjections, direct sums, etc.) defined objectwise.

\begin{example}\label{example:free modules}
Fixing $c_0$, the assignment sending $c\in \cC$ to the free $A$-module generated by the set $\cC(c_0,c)$ extends in an obvious way to a $\cC$-module. We denote this $\cC$-module abusively by $\cC(c_0,\bullet)$.
\end{example}

We refer to the modules of this example as \emph{free modules}.

\begin{definition}
A $\cC$-module is said to be \emph{finitely generated} if it admits a surjection from a finite direct sum of free modules. We say that $\cC$ is \emph{Noetherian over $A$} if submodules of finitely generated $\cC$-modules are again finitely generated.
\end{definition}

Given a functor $\Phi:\cC\to \mathcal{D}$, we write $\Phi^*$ and $\Phi_!$ for the restriction and left Kan extension along $\Phi$, respectively, regarded as adjoint functors between the categories of $\cC$-modules and $\mathcal{D}$-modules. The left Kan extension exists by (the dual of) \cite[Thm. X.3.1]{MacLane:CWM}, since the category of $\mathcal{D}$-modules is cocomplete. As a left adjoint, $\Phi_!$ preserves colimits.

We record the following easy facts about these functors.

\begin{lemma}\label{lem:full preserve surjection}
Let $\Phi:\mathcal{C}\to \mathcal{D}$ be a functor. The functors $\Phi^*$ and $\Phi_!$ preserve surjections. If $\Phi$ is essentially surjective, then $\Phi^*$ also reflects surjections.
\end{lemma}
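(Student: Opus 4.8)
The plan is to reduce each assertion to the objectwise description of the Abelian structure on the module categories. The key preliminary observation is that, for $\mathcal{C}$-modules or $\mathcal{D}$-modules alike, a map is a surjection if and only if it is an epimorphism if and only if it is objectwise surjective; this is immediate from the fact that the relevant Abelian-categorical notions are computed objectwise.

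For $\Phi^*$ the claim is then immediate: if $f\colon M\to N$ is a surjection of $\mathcal{D}$-modules, then for every object $c$ of $\mathcal{C}$ the component $(\Phi^*f)_c = f_{\Phi(c)}$ is surjective by hypothesis, so $\Phi^*f$ is objectwise surjective, hence a surjection. For $\Phi_!$ I would instead invoke the fact, already noted above, that $\Phi_!$ is a left adjoint and so preserves all colimits, in particular cokernels. Thus, if $f$ is a surjection of $\mathcal{C}$-modules, then $\coker f = 0$, whence $\coker(\Phi_!f)\cong\Phi_!(\coker f)\cong\Phi_!(0)=0$, so $\Phi_!f$ is an epimorphism, i.e.\ a surjection.

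For the reflection statement, assume $\Phi$ is essentially surjective and let $f\colon M\to N$ be a map of $\mathcal{D}$-modules such that $\Phi^*f$ is a surjection, i.e.\ $f_{\Phi(c)}$ is surjective for all objects $c$ of $\mathcal{C}$. Given an arbitrary object $d$ of $\mathcal{D}$, use essential surjectivity to choose an object $c$ of $\mathcal{C}$ together with an isomorphism $\alpha\colon\Phi(c)\xrightarrow{\ \sim\ }d$. Naturality of $f$ gives $f_d\circ M(\alpha) = N(\alpha)\circ f_{\Phi(c)}$; since $M(\alpha)$ and $N(\alpha)$ are isomorphisms and $f_{\Phi(c)}$ is surjective, it follows that $f_d$ is surjective. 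As $d$ was arbitrary, $f$ is objectwise surjective, hence a surjection.

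There is no substantial obstacle in this lemma; the only points requiring (routine) justification are the identification of surjections of modules with objectwise-surjective maps and the right-exactness of $\Phi_!$, the former being a consequence of the objectwise Abelian structure and the latter of $\Phi_!$ being a left adjoint.
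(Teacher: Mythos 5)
Your proof is correct and follows the same route as the paper: the $\Phi^*$ claims are handled objectwise (the paper calls them trivial; you supply the routine naturality argument for reflection), and the $\Phi_!$ claim is deduced exactly as in the paper from preservation of cokernels as colimits. No issues.
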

\begin{proof}
The claims about $\Phi^*$ are trivial. The claim about $\Phi_!$ follows from the fact that it preserves cokernels (which are colimits), hence surjections. 
\end{proof}

\begin{lemma}\label{lem:counit surjection}
Let $\Phi:\mathcal{C}\to \mathcal{D}$ be an essentially surjective functor. For any $\D$-module $M$, the counit $\Phi_!\Phi^*M\to M$ is a surjection.
\end{lemma}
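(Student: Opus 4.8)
The plan is to deduce this formally from the triangle identities of the adjunction $\Phi_!\dashv\Phi^*$ together with the last clause of Lemma \ref{lem:full preserve surjection}. Writing $\eta$ for the unit and $\epsilon$ for the counit of this adjunction, the relevant triangle identity states that, for any $\cC$-module $N$, the composite
\[
\Phi^*N \xrightarrow{\ \eta_{\Phi^*N}\ } \Phi^*\Phi_!\Phi^*N \xrightarrow{\ \Phi^*\epsilon_N\ } \Phi^*N
\]
is the identity. Applying this with $N$ replaced appropriately (equivalently, evaluating at the module $M$), we see that $\Phi^*\epsilon_M\colon \Phi^*\Phi_!\Phi^*M\to \Phi^*M$ admits a section, hence is a surjection of $\cC$-modules.

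Now I would invoke Lemma \ref{lem:full preserve surjection}: since $\Phi$ is essentially surjective, $\Phi^*$ reflects surjections. As $\Phi^*\epsilon_M$ is a surjection, it follows that $\epsilon_M\colon \Phi_!\Phi^*M\to M$ is a surjection of $\D$-modules, which is the claim.

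There is no real obstacle here; the statement is purely formal once the reflection property is in hand. If one prefers a hands-on verification avoiding the triangle identities, one can instead use the pointwise coend (colimit) formula for the left Kan extension: for $d\in\D$, the module $\Phi_!\Phi^*M$ evaluated at $d$ is the colimit of $M(\Phi(-))$ over the comma category $\Phi/d$, and the counit at $d$ is induced on the component indexed by $(c,f\colon \Phi(c)\to d)$ by $M(f)$. Choosing, by essential surjectivity, an object $c$ with an isomorphism $g\colon \Phi(c)\xrightarrow{\sim} d$, the component indexed by $(c,g)$ maps onto $M(d)$ via the isomorphism $M(g)$, so $\epsilon_{M}$ is already surjective at $d$. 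Either way, the key input is essential surjectivity of $\Phi$, exactly as in the previous lemma.
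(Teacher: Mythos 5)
Your argument is correct and is exactly the paper's proof: apply the triangle identity to see that $\Phi^*\epsilon_M$ admits a section, then use the reflection of surjections by $\Phi^*$ from Lemma \ref{lem:full preserve surjection}. The alternative pointwise colimit argument you sketch is also fine, but the main line coincides with the paper's.
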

\begin{proof}
By essential surjectivity and Lemma \ref{lem:full preserve surjection}, it suffices to show that the counit is a surjection after applying $\Phi^*$, but this map admits a section by one of the triangle identities for the $(\Phi_!,\Phi^*)$-adjunction.
\end{proof}

\begin{lemma}\label{lem:kan extension fg}
Let $\Phi:\cC\to\mathcal{D}$ be a functor and $M$ a $\cC$-module. If $M$ is finitely generated, then so is $\Phi_!M$.
\end{lemma}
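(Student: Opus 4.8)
The plan is to reduce to the case of free modules and then identify the left Kan extension of a free module explicitly. Since $M$ is finitely generated, by definition there is a surjection $\bigoplus_{k=1}^m \cC(c_k,\bullet)\twoheadrightarrow M$ from a finite direct sum of free $\cC$-modules. Applying $\Phi_!$ and using that it preserves surjections (Lemma \ref{lem:full preserve surjection}) and colimits, hence finite direct sums, we obtain a surjection $\bigoplus_{k=1}^m \Phi_!\cC(c_k,\bullet)\twoheadrightarrow \Phi_!M$. So it suffices to prove that $\Phi_!$ carries each free $\cC$-module to a free $\cD$-module; concretely, I will show $\Phi_!\cC(c_0,\bullet)\cong\cD(\Phi c_0,\bullet)$ for every object $c_0$ of $\cC$.

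To see this, I would invoke the Yoneda lemma. For any object $c_0$ and any $\cC$-module $L$, the free--forgetful adjunction between $\Set$ and $A$-modules combined with the ordinary Yoneda lemma gives a bijection $\Hom(\cC(c_0,\bullet),L)\cong L(c_0)$, natural in $L$; the same holds over $\cD$. Chaining this with the $(\Phi_!,\Phi^*)$-adjunction, for any $\cD$-module $N$ we get natural isomorphisms
\[
\Hom(\Phi_!\cC(c_0,\bullet),N)\cong\Hom(\cC(c_0,\bullet),\Phi^*N)\cong(\Phi^*N)(c_0)=N(\Phi c_0)\cong\Hom(\cD(\Phi c_0,\bullet),N).
\]
As this is natural in $N$, the Yoneda lemma in the category of $\cD$-modules yields $\Phi_!\cC(c_0,\bullet)\cong\cD(\Phi c_0,\bullet)$. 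Combined with the first paragraph, $\Phi_!M$ receives a surjection from $\bigoplus_{k=1}^m \cD(\Phi c_k,\bullet)$, a finite direct sum of free $\cD$-modules, hence is finitely generated.

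I do not expect a genuine obstacle here; the only point requiring a moment's care is the identification of $\Phi_!$ of a free module with a free module, which is the familiar statement that left Kan extension preserves representables. One could alternatively extract this from the coend formula for $\Phi_!$ directly, but the adjunction argument above is cleaner and avoids any computation.
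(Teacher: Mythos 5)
Your proof is correct and follows essentially the same route as the paper: reduce to a finite direct sum of free modules, apply $\Phi_!$ to the surjection (using Lemma \ref{lem:full preserve surjection}), and use the Yoneda lemma together with the $(\Phi_!,\Phi^*)$-adjunction to compare $\Phi_!\cC(c_k,\bullet)$ with $\cD(\Phi(c_k),\bullet)$. The only, immaterial, difference is that you establish the full isomorphism $\Phi_!\cC(c_0,\bullet)\cong\cD(\Phi(c_0),\bullet)$, whereas the paper merely uses Yoneda to factor the surjection $\bigoplus_k\Phi_!\cC(c_k,\bullet)\to\Phi_!M$ through $\bigoplus_k\cD(\Phi(c_k),\bullet)$.
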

\begin{proof}
Supposing that $M$ is finitely generated, there are objects $c_i\in \cC$ and a surjection
\[
\bigoplus_{i=1}^n \cC(c_i,\bullet)\to M.
\]
The Yoneda lemma implies the existence of the dashed arrow in the commutative diagram
\[\xymatrix{
\displaystyle\bigoplus_{i=1}^n \Phi_!\cC(c_i,\bullet)\ar[d]\ar[r]& \Phi_!M\\
\displaystyle\bigoplus_{i=1}^n \mathcal{D}(\Phi(c_i),\bullet).\ar@{-->}[ur]&
}\] Since the top arrow is surjective by Lemma \ref{lem:full preserve surjection}, the dashed arrow is so as well, implying the claim.
\end{proof}

\begin{lemma}\label{lem:full and essentially surjective}
Let $\Phi:\mathcal{C}\to \mathcal{D}$ be a functor and $M$ a $\mathcal{D}$-module. If $\Phi$ is essentially surjective, then $M$ is finitely generated if $\Phi^*M$ is so. If $\Phi$ is also full, then the converse holds.
\end{lemma}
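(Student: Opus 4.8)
The plan is to prove the two implications separately, in each case reducing immediately to the lemmas already established.

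For the first implication, suppose $\Phi$ is essentially surjective and that $\Phi^*M$ is finitely generated. By Lemma \ref{lem:kan extension fg}, the $\mathcal{D}$-module $\Phi_!\Phi^*M$ is then finitely generated. On the other hand, Lemma \ref{lem:counit surjection} provides a surjection $\Phi_!\Phi^*M\to M$, exhibiting $M$ as a quotient of a finitely generated module; composing the defining surjection from a finite sum of free modules with the counit shows that $M$ is finitely generated.

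For the converse, assume in addition that $\Phi$ is full, and suppose $M$ is finitely generated, say via a surjection $\bigoplus_{i=1}^n \mathcal{D}(d_i,\bullet)\to M$. Applying $\Phi^*$ and invoking Lemma \ref{lem:full preserve surjection}, we obtain a surjection $\bigoplus_{i=1}^n \Phi^*\mathcal{D}(d_i,\bullet)\to \Phi^*M$, so it suffices to show that each $\Phi^*\mathcal{D}(d_i,\bullet)$ is finitely generated. Using essential surjectivity, choose $c_i\in\mathcal{C}$ together with an isomorphism $\Phi(c_i)\isomorphic d_i$; precomposition with this isomorphism identifies $\Phi^*\mathcal{D}(d_i,\bullet)$ with the $\mathcal{C}$-module $c\mapsto \mathcal{D}(\Phi(c_i),\Phi(c))$. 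The functor $\Phi$ induces a map of $\mathcal{C}$-modules $\mathcal{C}(c_i,\bullet)\to \mathcal{D}(\Phi(c_i),\Phi(\bullet))$, which is objectwise surjective precisely because $\Phi$ is full; since a surjection of sets induces a surjection of free $A$-modules, this is a surjection of $\mathcal{C}$-modules out of a free module, so $\Phi^*\mathcal{D}(d_i,\bullet)$ is finitely generated, and the claim follows.

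There is no serious obstacle here; the argument is a direct bookkeeping exercise with the adjunction $(\Phi_!,\Phi^*)$ and the Yoneda lemma. The only points requiring a moment's care are the translation of fullness into surjectivity of the comparison map out of a representable $\mathcal{C}$-module, and the corresponding use of essential surjectivity to replace $d_i$ by an object in the image of $\Phi$ up to isomorphism.
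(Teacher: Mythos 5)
Your proof is correct and follows essentially the same route as the paper: the forward direction via Lemma \ref{lem:kan extension fg} and the counit surjection of Lemma \ref{lem:counit surjection}, and the converse by lifting the generators $d_i$ to objects $c_i$ with $\Phi(c_i)\cong d_i$ and using fullness to get a surjection $\mathcal{C}(c_i,\bullet)\to\Phi^*\mathcal{D}(d_i,\bullet)$. The only cosmetic difference is that you factor the converse into two explicit surjections where the paper writes a single composite, so no further changes are needed.
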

\begin{proof}
Supposing that $\Phi^*M$ is finitely generated, Lemma \ref{lem:kan extension fg} implies that $\Phi_!\Phi^*M$ is so as well. Thus, by Lemma \ref{lem:counit surjection}, $M$ receives a surjection from a finitely generated module, implying the claim.

%there are objects $c_i\in \cC$ and a surjection
%\[
%\bigoplus_{i=1}^n \cC(c_i,\bullet)\to \Phi^*M.
%\]
%The Yoneda lemma implies the existence of the dashed arrow in the commutative diagram
%\[\xymatrix{
%\displaystyle\bigoplus_{i=1}^n \Phi_!\cC(c_i,\bullet)\ar[d]\ar[r]& \Phi_!\Phi^*M\ar[d]\\
%\displaystyle\bigoplus_{i=1}^n \mathcal{D}(\Phi(c_i),\bullet)\ar@{-->}[r]&M,
%}\] and the claim follows from commutativity and Lemmas \ref{lem:full preserve surjection} and \ref{lem:counit surjection}. Note that we use our assumption on $\Phi$ in invoking these lemmas.

Supposing that $M$ is finitely generated, there are objects $d_i\in \mathcal{D}$ and a surjection
\[
\bigoplus_{i=1}^n \D(d_i,\bullet)\to M.
\] By essential surjectivity, we may choose $c_i\in \mathcal{C}$ with $\Phi(c_i)=d_i$, and fullness implies that the first map in the composition 
\[
\bigoplus_{i=1}^n\cC(c_i,\bullet)\to \bigoplus_{i=1}^n\Phi^*\mathcal{D}(d_i,\bullet)\to \Phi^*M
\] is a surjection, implying the claim.
\end{proof}

It will also be useful to speak of finite generation in terms of submodules.

\begin{definition}
Let $M$ be a $\cC$-module and $S$ a set of objects of $\cC$. The submodule of $M$ \emph{generated by} $S$ is the image of the natural map 
\[
\bigoplus_{c\in S}\cC(c,\bullet)\otimes M(c)\to M(\bullet).
\] We say that $M$ is \emph{generated by $S$} if it is equal to the submodule generated by $S$.
\end{definition}

\begin{lemma}\label{lem:object finite generation}
Let $M$ be a $\cC$-module. If $M$ is finitely generated, then $M$ is generated by a finite set of objects. If $M(c)$ is finitely generated for every $c\in \cC$, then the converse holds.
\end{lemma}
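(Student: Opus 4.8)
The plan is to unwind both directions from the definitions, using the free modules $\cC(c,\bullet)$ as the bridge between the two notions of generation. For the first implication, suppose $M$ is finitely generated, so that there is a surjection $\bigoplus_{i=1}^n \cC(c_i,\bullet)\to M$. I would take $S=\{c_1,\dots,c_n\}$ and check that $M$ is generated by $S$ in the submodule sense: each generator of the free summand $\cC(c_i,\bullet)$ is the identity $\id_{c_i}\in\cC(c_i,c_i)$, which maps to some element $x_i\in M(c_i)$, and the surjectivity of the displayed map forces $M$ to coincide with the image of $\bigoplus_{i}\cC(c_i,\bullet)\otimes M(c_i)\to M$ — indeed already with the image of the sub-object spanned by $\id_{c_i}\otimes x_i$. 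This is essentially a restatement of the Yoneda lemma and should be a short argument.

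For the converse, assume $M(c)$ is finitely generated as an $A$-module for every $c$, and that $M$ is generated by a finite set of objects $S=\{c_1,\dots,c_n\}$. For each $c_i$, pick a finite generating set $x_{i,1},\dots,x_{i,k_i}$ of the $A$-module $M(c_i)$. Each $x_{i,j}$ determines, by Yoneda, a map of $\cC$-modules $\cC(c_i,\bullet)\to M$ sending $\id_{c_i}$ to $x_{i,j}$. I would then assemble these into a single map
\[
\bigoplus_{i=1}^n\bigoplus_{j=1}^{k_i}\cC(c_i,\bullet)\longrightarrow M
\]
from a finite direct sum of free modules, and argue it is surjective: its image is a submodule of $M$ containing every $x_{i,j}$, hence containing the $A$-span of all of them, which is all of $M(c_i)$ for each $i$; since the image is a submodule and contains $M(c_i)$ for each $c_i\in S$, it contains the submodule generated by $S$, which is $M$ by hypothesis. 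This exhibits $M$ as finitely generated.

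The main thing to be careful about — though it is not really an obstacle — is the bookkeeping identifying "submodule generated by $S$" with "image of a sum of free modules on the objects of $S$": one must note that the map $\bigoplus_{c\in S}\cC(c,\bullet)\otimes M(c)\to M$ factors (on the level of images) through any choice of $A$-module generators of the $M(c)$, which is exactly where the hypothesis that each $M(c)$ is finitely generated enters. Both directions are then immediate formal consequences of the Yoneda lemma and the definitions, so I expect the whole proof to be a few lines.
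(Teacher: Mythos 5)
Your proposal is correct and follows essentially the same route as the paper: the forward direction factors the given surjection through the action map $\bigoplus_i\cC(c_i,\bullet)\otimes M(c_i)\to M$ via the Yoneda correspondence, and the converse composes free covers $A\langle X_{c}\rangle\to M(c)$ with that same action map to produce a surjection from a finite direct sum of free modules. No gaps.
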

\begin{proof}
Suppose first that $M$ is finitely generated, and choose a surjection as in the top row of the following diagram:
\[
\xymatrix{
\displaystyle\bigoplus_{i=1}^n\cC(c_i,\bullet)\ar[r]\ar@{-->}[d]& M\\
\displaystyle\bigoplus_{i=1}^n\cC(c_i,\bullet)\otimes M(c_i).\ar[ur]
}\] By the Yoneda lemma, the data of the map $\cC(c_i,\bullet)\to M$ is equivalent to that of an element of $M(c_i)$, and these elements determine the dashed filler. Since the top map is surjective, so is the diagonal map, implying the claim.

For the converse, choose a generating set of objects $S$ and, for each $c\in S$, a surjection $A\langle X_c\rangle\to M(c)$ from a free $A$-module with each $X_c$ finite. The composite surjection
\[\bigoplus_{c\in S}\bigoplus_{x\in X_c}\cC(c,\bullet)\cong\bigoplus_{c\in S}\cC(c,\bullet)\otimes A\langle X_c\rangle\to \bigoplus_{c\in S}\cC(c,\bullet)\otimes M(c)\to M\] witnesses $M$ as finitely generated.
\end{proof}

We close this section by recording the small fraction of categorical Gr\"{o}bner theory we need---see \cite{sam} for much more. In the following definition, given an object $c$ of the category $\cC$, we write $\cC_c$ for the set of isomorphism classes of objects in the comma category $(c\downarrow\cC)$. If $\cC$ is directed, in that all endomorphisms are the identity, then the set $\cC_c$ is in fact a partial order by putting $f_1 \leq f_2$ so long as there is a morphism $h$ with $f_2 = h \circ f_1$. This relation can be seen to respect isomorphism classes in the comma category.

\begin{definition}\label{def:grobner}
Let $\Phi:\cC\to\mathcal{D}$ be a functor with $\cC$ directed, and consider the following conditions.
\begin{itemize}
\item[(G1)] The set-valued functor $c\mapsto \cC_c$ admits a lift to the category of well-orders.
\item[(G2)] For every object $c\in \cC$, the poset $\cC_c$ is Noetherian in that it admits no infinite descending chains or infinite anti-chains.
\item[(F)] For every object $d\in \mathcal{D}$, the comma category $(d\downarrow \Phi)$ admits a finite weakly initial subset.
\end{itemize}
We say that $\cC$ is \emph{Gr\"{o}bner} if (G1) and (G2) hold. We say that $\Phi$ is a \emph{Gr\"{o}bner cover} if (F) holds and $\cC$ is Gr\"{o}bner.
\end{definition}

\begin{theorem}[Sam--Snowden]\label{thm:samsnowden}
If $A$ is Noetherian, then any category admitting a Gr\"{o}bner cover is Noetherian over $A$.
\end{theorem}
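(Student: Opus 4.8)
The plan is to prove the theorem in two stages: first, that a Gröbner category $\cC$ is itself Noetherian over $A$; and second, that condition (F) together with essential surjectivity of the cover $\Phi\colon\cC\to\cD$ transports Noetherianity from $\cC$ to $\cD$. For the transport, I would begin by noting that, for each object $d\in\cD$, a finite weakly initial subset $W\subseteq(d\downarrow\Phi)$ gives, via the Yoneda lemma, a surjection $\bigoplus_{(c,\phi)\in W}\cC(c,\bullet)\twoheadrightarrow\Phi^{*}\cD(d,\bullet)$: the summand indexed by $(c,\phi)$ is the map classified by $\phi\in\cD(d,\Phi c)=(\Phi^{*}\cD(d,\bullet))(c)$, and weak initiality of $W$ says exactly that every $\psi\in\cD(d,\Phi c')$, viewed as an object of $(d\downarrow\Phi)$, receives a morphism from some $(c,\phi)\in W$, i.e. $\psi=\Phi(f)\circ\phi$ for suitable $f$, so that $\psi$ lies in the image. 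Hence $\Phi^{*}\cD(d,\bullet)$ is finitely generated over $\cC$, and since $\Phi^{*}$ preserves surjections and finite direct sums (Lemma~\ref{lem:full preserve surjection}) it follows, by restricting a presentation $\bigoplus_i\cD(d_i,\bullet)\twoheadrightarrow M$, that $\Phi^{*}M$ is finitely generated over $\cC$ whenever $M$ is finitely generated over $\cD$. Granting Stage~1, a submodule $N\subseteq M$ of a finitely generated $\cD$-module then has $\Phi^{*}N\subseteq\Phi^{*}M$ finitely generated over $\cC$, and essential surjectivity of $\Phi$ lets Lemma~\ref{lem:full and essentially surjective} promote this to finite generation of $N$; thus $\cD$ is Noetherian over $A$.

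Stage~1 is the substance. First reduce: a finitely generated $\cC$-module is a quotient of a finite sum $\bigoplus_i\cC(c_i,\bullet)$, and the preimage of a submodule under such a surjection is a submodule of that free module, so since the class of objects all of whose submodules are finitely generated is closed under extensions, hence under finite direct sums, it suffices to show a single representable $\cC(c,\bullet)$ has this property. View the elements of the sets $\cC(c,d)$, over all $d$, as \emph{monomials}; since $\cC$ is directed they form a poset under the divisibility order of Definition~\ref{def:grobner}; condition (G1) equips this set with a well-order refining divisibility and compatible with postcomposition, while condition (G2) says the divisibility poset is a well-partial-order (no infinite antichain, no infinite strictly descending chain), and hence that every infinite sequence of monomials admits an infinite weakly ascending subsequence. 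Given a submodule $N\subseteq\cC(c,\bullet)$, assign to each monomial $m$ the set $L_N(m)\subseteq A$ consisting of $0$ together with the leading coefficients, with respect to the well-order, of elements of $N$ whose leading monomial is $m$; compatibility of the $\cC$-action with the orders makes $L_N(m)$ an ideal of $A$, monotone in $m$ along the divisibility poset.

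Now run the Gröbner-basis construction: processing monomials in the well-order, maintain a finite subset $G\subseteq N$, and at each monomial $m$, whenever the leading-coefficient ideal at $m$ of the submodule generated by $G$ is strictly smaller than $L_N(m)$, enlarge $G$ by finitely many elements of $N$ (possible since $A$ is Noetherian) until the two agree at $m$. This enlargement occurs only finitely often: otherwise the monomials at which we enlarge, being infinite, contain an infinite divisibility-ascending subsequence $t_1\le t_2\le\cdots$, necessarily strictly increasing in the well-order, and tracing through the construction — each $t_i$ had its leading-coefficient ideal made to equal $L_N(t_i)$ by the time the later $t_{i+1}$ was processed, and $L_N$ is monotone — yields a strictly increasing chain $L_N(t_1)\subsetneq L_N(t_2)\subsetneq\cdots$ of ideals of $A$, contradicting Noetherianity of $A$. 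So the construction ends with a finite $G$, and the usual leading-term reduction, legitimate because the monomial order is a well-order so one may induct on leading monomials, shows that $G$ generates $N$. The main obstacle is precisely this Stage~1 bookkeeping: one must orchestrate the well-order (for the reduction and the induction on leading monomials), the well-partial-order property from (G2) (for the ascending subsequence), and the Noetherianity of $A$ (for chains of coefficient ideals), the subtlest ingredient being the order-compatibility of the $\cC$-action encoded in (G1), which is what makes $L_N$ well defined and monotone in the first place.
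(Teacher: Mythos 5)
This theorem is imported from Sam--Snowden \cite{sam}; the paper gives no proof of it, so there is no internal argument to compare against. Your reconstruction follows the same route as the source: reduce to submodules of a single representable $\cC(c,\bullet)$, treat morphisms out of $c$ as monomials ordered by the well-order of (G1) and the divisibility order of (G2), observe that the leading-coefficient ideals $L_N(m)$ are monotone along divisibility because the action is order-compatible, and rule out an infinite run of the Gr\"{o}bner-basis construction by extracting (via the well-partial-order property) a divisibility-ascending sequence of enlargement monomials, which would force a strictly ascending chain of ideals of $A$. The transport step via property (F) --- Yoneda identifies a finite weakly initial subset of $(d\downarrow\Phi)$ with a surjection onto $\Phi^{*}\cD(d,\bullet)$, so $\Phi^{*}$ preserves finite generation --- is also exactly the argument of \cite{sam}. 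The bookkeeping is sound; in the termination step the order that matters on the $t_i$ is the order of processing, which an extracted subsequence automatically respects, so your parenthetical claim that the subsequence is strictly increasing in the well-order is unnecessary (and would require the well-order to refine divisibility, which is not among the stated hypotheses).

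One point you handle correctly that deserves emphasis: you assume $\Phi$ is essentially surjective, and some such hypothesis is genuinely needed. Without it, Lemma~\ref{lem:full and essentially surjective} is unavailable, and the statement fails as literally written with Definition~\ref{def:grobner}: the functor from the empty category into any $\cD$ satisfies (F) vacuously, since the empty subset is weakly initial in the empty comma category. Essential surjectivity is part of Sam and Snowden's definition of a quasi-Gr\"{o}bner category but is absent from the paper's definition of a Gr\"{o}bner cover; your proof silently supplies the missing hypothesis, which does hold in the paper's application (the forgetful functor in Theorem~\ref{thm:genus one} is essentially surjective).
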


\section{Categories of graphs}

A graph is a finite CW complex of dimension at most $1$. A simple graph is a finite simplicial complex of dimension at most $1$. A topological graph is a topological space homeomorphic to a graph. A subgraph is a subcomplex of a graph. A topological subgraph is a subspace of a graph that is also a topological graph. A path in the simple graph $G$ is finite list of distinct vertices with each consecutive pair adjacent, modulo reversal of order. We write $V_G$, $E_G,$ and $P_G$ for the respective sets of vertices, edges, and paths of $G$. As every edge determines a canonical path, given by its endpoints, we may regard $E_G$ as a subset of $P_G$.

Simple graphs form a category $\GE$ with morphisms injective simplicial maps, which we refer to as embeddings. By definition, an embedding preserves adjacency of vertices; we say it is full if it also reflects adjacency. 

Topological graphs form a category $\TGE$ in which a morphism is a topological embedding, i.e., a homeomorphism onto a topological subgraph. These two categories are related via geometric realization. As configuration spaces of graphs, our main object of study, are isotopy invariant, it will also be natural to consider the quotient category $\TGI$ of topological graphs and isotopy classes of topological embeddings.

Another notion of morphism between graphs will be relevant to our categorification of Robertson's conjecture.

\begin{definition}
Let $G$ and $G'$ be simple graphs. A \emph{topological minor morphism} from $G$ to $G'$ is a pair $\rho = (\rho_V,\rho_E)$ of functions
\begin{align*}
\rho_V: V_G &\rightarrow V_{G'}\\
\rho_E: E_G &\rightarrow P_{G'}
\end{align*}
satisfying the following conditions:
\begin{enumerate}
\item $\rho_V$ is an injection;
\item if $e \in E_G$, then $\rho_E(e)$ has the same endpoints as $e$;
\item if $e \in E_G$ and $v \in V_G$, then $\rho_V(v)\in \rho_E(e)$ if and only if $e$ is incident on $v$;
\item if $e_1\neq e_2 \in E_G$ have the common endpoint $v$, then $\rho_E(e_1)\cap \rho_E(e_2)=\{\rho_V(v)\}$, and otherwise $\rho_E(e_1)\cap\rho_E(e_2)=\varnothing$.
\end{enumerate}
If such a morphism exists, we say that $G$ is a \emph{topological minor} of $G'$.
\end{definition}

\begin{remark}
    Note that the fourth and final condition in the definition of topological minor morphism is necessary and not implied by the prior three. For instance, let $G$ be the graph comprised of two disconnected edges, and let $G'$ be the graph that looks like the letter $X$ with 5 vertices and 4 edges. One can define a map that satisfies the first three conditions above by sending each line segment to one of the two ``legs" of the $X$. In this case, the image paths intersect at the central vertex, which is not in the image of any vertex. In particular, the third condition above is satisfied whereas the fourth is not.
\end{remark}

Alternatively, $G$ is a topological minor of $G'$ if a subdivision of $G$ is homeomorphic to a subgraph of $G$. The wide subcategory of topological minor morphisms $\rho$ such that $\rho_E$ factors through $E_{G'}$ is isomorphic to $\GE$.

Via concatenation of paths, the function $\rho_E$ extends canonically to a function $\rho_P:P_G\to P_{G'}$. Thus, given topological minor morphisms $\rho:G\to G'$ and $\sigma:G'\to G''$, we may define the composite $\sigma\circ\rho=(\sigma_V\circ\rho_V, \sigma_P\circ\rho_E)$, which is easily checked to be a topological minor morphism. We obtain in this way the category $\GTM$ of simple graphs and topological minor morphisms.

Choosing subdivisions of the edges of $G$, a topological minor morphism $\rho:G\to G'$ gives rise to a topological embedding of $G$ into $G'$, which can be made canonical by subdividing evenly. The failure of this construction to respect composition may be corrected by a straight-line isotopy, and there results a well-defined functor $\GTM\to \TGI$. As a matter of terminology, a topological minor morphism that is sent to an isomorphism in $\TGI$ will be called a \emph{subdivision}.

The topological minor relation is to be contrasted with the classical minor relation, where $G$ is said to be a minor of $G'$ if it may be obtained by deleting and contracting edges of $G'$. This relation also corresponds to the existence of a \emph{minor morphism} in a category $\GM$, whose details may be found in \cite{MPR} (where it is called $\mathrm{G}^{\mathrm{op}}$). This category, too, contains the category $\GE$.

We summarize the relationships among the categories discussed so far in the following commutative diagram of functors.

\[
\xymatrix{
&\GE\ar[dl]\ar[d]\ar[dr]\\
\TGE\ar[dr]&\GTM\ar[d]&\GM\\
&\TGI
}\]

We will also have cause to work with graphs with vertices labeled by elements of a well-quasi-order $(Q,\leq_Q)$. In this setting, which we indicate with a subscript $Q$ (e.g., $\GTM_Q$), morphisms are required to be compatible with labels. That is to say, for any vertex $v$, it must be the case that the label of $v$ is $\leq_Q$ than the label of its image.

For future use, we also record a simple, but important, example of a $\GTM$-module.

\begin{example}\label{example:cellular chains}
The assignment $G\mapsto C_*^{CW}(G)$ of a graph to its complex of cellular chains carries a canonical differential graded $\GTM$-module structure. In view of the canonical isomorphism $C_0^{CW}(G)\cong \mathbb{Z}\langle V_G\rangle$, the functoriality in degree $0$ is immediate. In degree $1$, in view of the canonical isomorphism $C_1^{CW}(G)\cong\mathbb{Z}\langle E_G\rangle$, we need only notice the canonical function $P_G\to \mathbb{Z}\langle E_G\rangle$ given by summing over the edges connecting consecutive pairs of vertices in a path. It is an easy exercise to check that the cellular differential is a map of $\GTM$-modules.
\end{example}

\section{Noetherianity conjectures}

According to the graph minor theorem of Robertson--Seymour, the minor relation admits no infinite antichains. The following categorification of this result was conjectured in \cite{MPR}.

\begin{conjecture}[Categorical graph minor conjecture]\label{conjecture:cgmt}
The category $\GM$ is Noetherian over any Noetherian ring.
\end{conjecture}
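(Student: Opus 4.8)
The natural line of attack is to produce a Gr\"obner cover of $\GM$ in the sense of Definition~\ref{def:grobner} and to apply Theorem~\ref{thm:samsnowden}. The category $\GM$ is not directed, since graph automorphisms are non-trivial endomorphisms, so the first step is to rigidify: let $\cC$ be the category whose objects are simple graphs equipped with a linear order on the vertex set and whose morphisms are minor morphisms $\rho$ with $\rho_V$ order-preserving. Order-preserving self-maps of a finite totally ordered set are identities, so $\cC$ is directed, and the forgetful functor $\Phi:\cC\to\GM$ is essentially surjective (and, with this definition, full, so that finite generation transports freely across $\Phi$ via Lemma~\ref{lem:full and essentially surjective}). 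Condition (G1) holds because, for fixed $c$, the comma category $(c\downarrow\cC)$ has only finitely many isomorphism classes of objects of each size---measured, say, by the number of edges plus vertices of the target---so that one may well-order $\cC_c$ by first comparing this statistic and breaking ties by any fixed recipe, compatibly with the functoriality in $c$. The descending-chain half of (G2) follows similarly: a non-invertible morphism of $\GM$ strictly decreases edges-plus-vertices, so there is no infinite strictly descending chain in $\cC_c$. Finally (F): for a fixed graph $d$, the finite set of orderings of $d$, each with its tautological structure map, should be weakly initial in $(d\downarrow\Phi)$, because any minor morphism out of $d$ becomes order-preserving once the source is given the order pulled back from the target---this is the comma-category shadow of the fullness of $\Phi$.

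This leaves the antichain half of (G2), which is where the entire content of the conjecture resides: one must show that $\cC_c$ has no infinite antichain, i.e.\ that among any infinite family of vertex-ordered graphs $X_i$, each carrying a specified minor morphism from the fixed graph $c$, some $X_i$ admits a minor morphism to some $X_j$ that is simultaneously order-preserving and compatible with the structure maps from $c$. The graph minor theorem of Robertson--Seymour \cite{RSXX} furnishes, for free, infinitely many pairs for which $X_i$ is a bare minor of $X_j$; the problem is to promote such a bare minor relation to a morphism of $\cC_c$. Two constraints must be met at once. The minor embedding must respect the linear orders, for which the appropriate input is a \emph{labeled} graph minor theorem applied with vertices labeled by their rank; here one must be careful, since the label sets vary with $i$ and the correct notion records only the \emph{relative} order rather than a labeling by $(\bbN,\leq)$, so that genuine well-quasi-order data must be extracted. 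And the minor embedding must respect the pinning data coming from $c$, which requires a \emph{rooted} refinement. Assembling a single statement---rooted and labeled---strong enough to yield the antichain part of (G2), and verifying that it is, or reduces to, a theorem of Robertson--Seymour, is precisely the hard part, and I expect essentially all of the difficulty (and all of the risk of the conjecture being false as literally stated) to be concentrated there.

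Granting such a relativized minor theorem, the argument concludes formally: $\Phi:\cC\to\GM$ is then a Gr\"obner cover, so Theorem~\ref{thm:samsnowden} shows that $\GM$ is Noetherian over any Noetherian ring. It is worth noting that even a successful execution of this plan would not obviously yield more than the statement as phrased, since the passage through rigidification and the reliance on Robertson--Seymour are both essential; in particular the argument would not be expected to produce effective bounds, in keeping with the ineffectivity of the graph minor theorem itself.
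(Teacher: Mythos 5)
The statement you are trying to prove is Conjecture~\ref{conjecture:cgmt}, the categorical graph minor conjecture of Miyata--Proudfoot--Ramos: the paper does not prove it, states it explicitly as a conjecture imported from \cite{MPR}, and uses it only as a hypothesis (in Proposition~\ref{prop:contractions} and Corollary~\ref{cor:finite filtrations}). So there is no proof in the paper to compare against, and your proposal is not a proof either --- by your own account. Your outline of the Sam--Snowden strategy (rigidify to a directed category of vertex-ordered graphs, verify (G1), (F), and the descending-chain half of (G2), then isolate the antichain condition) is a reasonable and standard reduction, and you correctly locate where all the content lives: the antichain half of (G2) asks for a \emph{rooted, order-labeled} strengthening of the Robertson--Seymour theorem, in which the minor relation must be realized compatibly with a fixed pinning from $c$ and with linear orders on the vertex sets. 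That strengthening is not a corollary of \cite{RSXX}; it is precisely the open combinatorial problem that makes Conjecture~\ref{conjecture:cgmt} a conjecture rather than a theorem. (Compare the known categorifications of Higman's lemma and Kruskal's theorem cited in the introduction, each of which required a genuinely new argument beyond the underlying well-quasi-order statement.) A small directional slip: in $\GM$ a morphism $G\to G'$ witnesses $G$ as a minor of $G'$, so non-invertible morphisms \emph{increase} the size statistic; your descending-chain conclusion survives, but the sentence as written has the inequality backwards.

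In short: what you have is a plausible reduction of the conjecture to an unproven relativized graph minor theorem, together with an honest admission that you cannot prove that theorem. That is a useful way to understand the conjecture, but it leaves the statement exactly as open as it was. If you want to make progress on something provable in this circle of ideas, the paper's Theorem~\ref{thm:genus one} (the $k=1$ case of the \emph{topological} minor analogue, via Barter and Proudfoot--Ramos) and Theorem~\ref{thm:cographs} are the places where this strategy has actually been carried to completion.
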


The topological minor relation, on the other hand, does admit infinite antichains. The example will involve the following important family of graphs.

\begin{definition}
For $k>0$, the $k$th \emph{Robertson chain} is the graph obtained from a path with $k+1$ vertices by doubling each edge and subdividing minimally to achieve simplicity.
\end{definition}

Thus, $R_1$ is a triangle and $R_2$ the union of two triangles at a common vertex.

\begin{example}
Writing $R_k'$ for the graph obtained from $R_k$ by attaching three leaves at each end, the collection $\{R_k'\}_{k\geq1}$ is an antichain for the topological minor relation.
\end{example}

Robertson's conjecture is that this example is the only thing that can go wrong. More precisely, the conjecture states that the topological minor relation admits no infinite antichains after fixing $k>0$ and restricting to graphs not admitting $R_k$ as a topological minor. A proof of this conjecture has been announced by Liu--Thomas \cite{LT}.

Motivated by this conjecture, and writing $\GTM_k\subseteq \GTM$ for the full subcategory spanned by graphs not admitting $R_k$ as a topological minor, we propose the following.

\begin{conjecture}[Categorical Robertson conjecture]\label{conjecture:crc}
Fix $k>0$ and a well-quasi-order $Q$. The category $\GTM_{k,Q}$ is Noetherian over any Noetherian ring.
\end{conjecture}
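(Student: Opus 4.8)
The natural plan is to produce a Gr\"obner cover of $\GTM_{k,Q}$ in the sense of Definition~\ref{def:grobner} and conclude via Theorem~\ref{thm:samsnowden}: one seeks a \emph{directed} category $\cC$ together with an essentially surjective functor $\Phi\colon\cC\to\GTM_{k,Q}$ such that $\cC$ is Gr\"obner and condition~(F) holds. One cannot take $\cC=\GTM_{k,Q}$ itself, since that category has nontrivial automorphisms (for instance the edge-flip of a single edge) and so is not directed; the cover must resolve these. The analogous question for the category $\GM$ of all minor morphisms---Conjecture~\ref{conjecture:cgmt}---is open precisely because no such cover is known there, and the hope in the present setting is that excluding $R_k$ as a topological minor constrains graphs enough to make the construction feasible.

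The cover should be built from the structure theory underlying the combinatorial Robertson conjecture of Liu--Thomas~\cite{LT}. A graph with no $R_k$ topological minor is, heuristically, ``short in a path-like direction'', and one expects a decomposition of such graphs in a tree-like pattern, over interfaces whose size is bounded in terms of $k$, into torsos of controlled complexity---very possibly established by induction on $k$, with base case $k=1$, where the graphs with no $R_1$ (a triangle) as a topological minor are exactly the forests. Granting such a decomposition, one lets $\cC$ be the category of finite planted plane trees whose nodes are decorated by the combinatorial data of an interface and a torso together with an element of $Q$ recording vertex labels, with morphisms the evident subdivision-and-embedding maps, and defines $\Phi$ by gluing the torsos along the tree. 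Essential surjectivity of $\Phi$ is the content of the decomposition theorem, and $\cC$ is directed because planted plane trees are rigid.

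It then remains to verify that $\cC$ is Gr\"obner and that~(F) holds. For~(G1) one encodes each object of $\cC_T$ as a finite labelled plane tree and orders lexicographically by depth, arity, torso/interface type, and $Q$-label, using a fixed well-ordering of the finitely many combinatorial torso/interface types and any well-ordering refining $\leq_Q$ (which exists because $Q$ is a well-quasi-order); functoriality of the lift is routine bookkeeping. Condition~(G2)---that $\cC_T$ has no infinite descending chain or antichain---is a form of the categorified Kruskal tree theorem: the divisibility poset of finite trees decorated by a well-quasi-ordered set is itself a well-quasi-order, by a Nash-Williams minimal-bad-sequence argument of exactly the kind worked out by Barter and Proudfoot--Ramos~\cite{Barter,PR-genus}, here with decoration well-quasi-order the product of the finite set of torso/interface types with $Q$. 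For~(F): given $G'\in\GTM_{k,Q}$, the decomposition theorem exhibits at least one object of $(G'\downarrow\Phi)$, and since any two decompositions of a fixed graph differ by boundedly many local moves, finitely many minimal decorated trees dominate $G'$, supplying the required finite weakly initial subset.

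The crux---and the reason this remains a conjecture rather than a corollary of~\cite{LT}---is the second step. Robertson's combinatorial conjecture is, on its face, a statement about the \emph{objects} of $\GTM_k$ (that they contain no infinite antichain), whereas a Gr\"obner cover must also organize the \emph{morphisms}: one needs the decomposition chosen coherently enough that topological embeddings of graphs are reflected by embeddings of decorated trees, one needs interface sizes to stay uniformly bounded across families, and one must carry along the well-quasi-order $Q$. Extracting such a uniform, rooted, $Q$-labelled structure theorem from the (as yet only announced) work of Liu--Thomas---and confirming in particular that excluding $R_k$ as a \emph{topological} minor, rather than as an ordinary minor, really does force a tree-of-bounded-pieces decomposition compatible with subgraph inclusions---is where the real difficulty lies. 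The case $k=1$ is unconditional: there $\cC$ is the classical category of decorated rooted trees, $\Phi$ lands in the forests, and one recovers Theorem~\ref{thm:genus one}.
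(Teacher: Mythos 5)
The statement you are addressing is a \emph{conjecture}: the paper offers no proof of it for general $k$, and your proposal does not close that gap either --- nor do you claim it does. What you have written is a program, and the honest verdict is that it is essentially the right program, aligned with the one piece of evidence the paper does supply. The paper's unconditional content is only the case $k=1$ (Theorem \ref{thm:genus one}), proved exactly along your lines: a cover of $\GTM_{1,Q}$ by a directed category of rooted, planar, $Q$-labeled trees, verification of property (F) for the forgetful functor, and an appeal to the Gr\"obner machinery of Sam--Snowden via the categorified Kruskal theorem of Barter and Proudfoot--Ramos (with Draisma's results handling general $Q$). Your identification of forests as the $R_1$-free graphs, your explanation of why $\GTM_{k,Q}$ itself cannot be the Gr\"obner category (nontrivial automorphisms), and your observation that the difficulty is lifting an object-level antichain statement to a morphism-level Noetherianity statement are all correct and match the paper's framing.

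The genuine gap is the one you name yourself, and it is worth being precise about why it is not merely bookkeeping. First, the Liu--Thomas result \cite{LT} is a well-quasi-ordering statement about the \emph{existence} of topological minor containments between $R_k$-free graphs; a Gr\"obner cover requires a structure theorem strong enough that \emph{every} topological minor morphism $G\to G'$ is induced by a morphism of decorated trees over some choice of decompositions, and no such functorial decomposition theorem is currently available (this is the same obstruction that keeps Conjecture \ref{conjecture:cgmt} open despite Robertson--Seymour). Second, your verification of (F) --- ``any two decompositions of a fixed graph differ by boundedly many local moves'' --- is an assertion, not an argument, and for tree-decomposition-type structures uniqueness up to bounded moves is typically false without further canonicity hypotheses; establishing a finite weakly initial set in $(G'\downarrow\Phi)$ would require a canonical (or at least finitely-ambiguous) decomposition. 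Third, one must check that the tree category you build is itself covered by the existing Kruskal-type results: the morphisms there are contractions/topological minor morphisms of rooted planar trees, and it is not automatic that the gluing functor $\Phi$ carries these to topological minor morphisms of graphs rather than to the coarser ordinary minor morphisms. None of this makes your outline wrong; it makes it a correct diagnosis of why the statement remains Conjecture \ref{conjecture:crc} rather than a theorem.
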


As we now explain, the simplest case of this conjecture is already implicit in the literature.

\begin{theorem}[Barter, Proudfoot--Ramos]\label{thm:genus one}
Conjecture \ref{conjecture:crc} holds in the case $k=1$.
\end{theorem}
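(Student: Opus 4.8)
The plan is to recognize $\GTM_1$ as a category of forests and to exhibit a Gr\"{o}bner cover of it in the sense of Definition~\ref{def:grobner}, so that Noetherianity follows from Theorem~\ref{thm:samsnowden}. The first step is the identification: a simple graph admits the triangle $R_1$ as a topological minor if and only if it contains a cycle, since a subdivision of a triangle is exactly a cycle; hence $\GTM_1$ is precisely the full subcategory of $\GTM$ spanned by forests, and $\GTM_{1,Q}$ is the category of $Q$-labeled forests with topological minor morphisms. Unwinding conditions (1)--(4), such a morphism $G\to G'$ is exactly the combinatorial datum witnessing a subdivision of $G$ as a labeled subforest of $G'$.

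The category $\GTM_{1,Q}$ is not directed---a forest has automorphisms permuting isomorphic branches and isomorphic components---so one first rigidifies. Let $\cC$ denote the category whose objects are finite ordered rooted $Q$-labeled forests (the components, and the children at each vertex, linearly ordered) and whose morphisms are the root- and order-preserving topological minor morphisms, and let $\Phi\colon\cC\to\GTM_{1,Q}$ be the functor forgetting the root and the orderings. Every endomorphism in $\cC$ is the identity, so $\cC$ is directed, and $\Phi$ is essentially surjective. Condition (F) for $\Phi$ then reduces to the statement that, for each labeled forest $G'$, only finitely many ``shapes'' of ordered rooted structure on subdivided enlargements of $G'$ are needed to dominate the comma category $(G'\downarrow\Phi)$; the point is that any subdivision itself arises as a $\cC$-morphism, so the \emph{a priori} infinite family of subdivisions collapses, leaving the finitely many rootings and orderings of the (bounded) combinatorial cores.

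It remains to check that $\cC$ is Gr\"{o}bner, i.e.\ that (G1) and (G2) hold. For an object $c$ of $\cC$, the poset $\cC_c$ is a poset of ordered rooted forests carrying, in addition to their $Q$-labels, a bounded amount of $c$-dependent decoration, ordered by the topological-minor relation. A Kruskal-style recursive well-order---compare first by number of vertices, then lexicographically on the ordered list of branches at the root---furnishes the lift to well-orders required by (G1). The substantive point is (G2): the Noetherianity of $\cC_c$ is precisely an ordered, rooted, labeled version of Kruskal's tree theorem, which is the content of Barter's categorification \cite{Barter} as refined by Proudfoot--Ramos \cite{PR-genus}; the extra decorations still take values in a well-quasi-order because $Q$ does and $c$ contributes only finitely much data. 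Granting this, $\Phi$ is a Gr\"{o}bner cover, and Theorem~\ref{thm:samsnowden} gives that $\GTM_{1,Q}$ is Noetherian over every Noetherian ring.

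The main obstacle is the bookkeeping that makes the two nontrivial conditions precise. For (G2) one must formulate the poset $\cC_c$ so that its order is exactly the tree-embedding order to which the labeled Kruskal theorem applies---in particular one must confirm that the nondegeneracy clause (4) in the definition of a topological minor morphism corresponds to honest (subdivided) subforest inclusions, rather than to some coarser relation. For (F) the delicate point is finiteness: one must verify that absorbing subdivisions into the morphisms of $\cC$ genuinely cuts the dominating family of $(G'\downarrow\Phi)$ down to a finite set, which requires a uniform bound on the cores of the relevant rooted enlargements of $G'$. Both are routine in spirit but are where the real work lies; everything else is a formal consequence of the categorical module theory developed above together with Theorem~\ref{thm:samsnowden}.
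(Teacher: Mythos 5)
Your proposal is correct and follows essentially the same route as the paper: rigidify $\GTM_{1,Q}$ via a directed category of rooted, planarly/linearly ordered $Q$-labeled trees, check property (F) for the forgetful functor, deduce the Gr\"{o}bner property from the Barter and Proudfoot--Ramos categorification of Kruskal's theorem, and conclude by Theorem~\ref{thm:samsnowden}. The paper likewise leaves the (F) and Gr\"{o}bner verifications to \cite{PR-genus} (with \cite{Draisma} handling general $Q$), so the steps you flag as ``the real work'' are exactly the ones it outsources.
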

\begin{proof}
Closely related to $\GTM_{1,Q}$ is the category $\mathcal{PGTM}_{1,Q}$ of $Q$-labeled trees equipped with the data of a root vertex and a planar embedding, in which a morphism is a topological minor morphism that preserves both the depth-first ordering induced by the planar structure and the root, as well as the tree partial order induced only by the root, while being compatible with the $Q$-labels as described above. It is not hard to show that the obvious forgetful functor to $\GTM_{1,Q}$ has property (F) of Definition \ref{def:grobner}, so it suffices by Theorem \ref{thm:samsnowden} to verify that $\mathcal{PGTM}_{1,Q}$ is Gr\"{o}bner. In the case where $Q$ is finite and discrete, this fact is \cite[Thm. 3.6]{PR-genus} in somewhat disguised form, and the proof goes through unchanged for general $Q$, since the foundational results of \cite{Draisma} are developed in this generality. For the (contravariant) equivalence between our topological minor morphisms and the contractions of \cite{PR-genus}, see Remark 2.1 of \emph{loc. cit.} and the references therein.
\end{proof}

\section{Configuration spaces of graphs}

We come to our main object of study.

\begin{definition}
Let $X$ be a topological space. For $n\geq0$, the \emph{configuration space} of $n$ (ordered) points in $X$ is the space 
\[
F_n(X)=\{(x_1,\ldots, x_n)\in X^k\mid i\neq j\implies x_i\neq x_j\}.
\]
\end{definition}

An injective map between spaces induces a map between their configuration spaces, and a pointwise injective homotopy induces a homotopy. In particular, the assignment $G\mapsto H_i(F_n(G))$ carries a canonical $\TGI$-module structure and hence, by restriction, canonical $\GTM$- and $\GTM_k$-module structures. Abusively, our notation will fail to distinguish among these structures. 

We now reformulate our main conjecture in these categorical terms.

\begin{conjecture}[Universal finite generation]\label{conjecture:main}
For any natural numbers $i$ and $n$, the $\TGI$-module $H_i(F_n(\bullet))$ is finitely generated.
\end{conjecture}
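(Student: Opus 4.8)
The plan is to deduce the statement from the Noetherianity conjectures of the previous section, i.e., to prove Theorem \ref{thm:main}. The first move is to replace $\TGI$ by the more combinatorial category $\GTM$: the geometric-realization functor $\GTM\to\TGI$ is essentially surjective, since every topological graph is homeomorphic to a simple graph, and full, since every topological embedding $|G|\hookrightarrow|G'|$ is, up to ambient isotopy, the realization of a composite of a subdivision $G\to\widehat G$ with a simplicial embedding $\widehat G\hookrightarrow G'$, and subdivisions are themselves morphisms of $\GTM$. By Lemma \ref{lem:full and essentially surjective} it therefore suffices to prove that $H_i(F_n(\bullet))$ is finitely generated as a $\GTM$-module.

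Next I would establish the conjecture after restriction to each full subcategory $\GTM_k\subseteq\GTM$. Here the input is Abrams' theorem \cite{A} that, for $G$ sufficiently subdivided relative to $n$, the space $F_n(G)$ deformation retracts onto the discretized configuration space, a finite cube complex that is combinatorial in $G$; equivalently, one may use the functorial combinatorial model of \cite{AnDrummondColeKnudsen:ESHGBG}. Since $\GTM$ is generated by simplicial embeddings and subdivisions, and this model is functorial for both, one obtains a chain complex of $\GTM$-modules computing $H_*(F_n(\bullet))$ that is finitely generated in each degree, its generators being configurations supported on a subgraph of bounded size and pushed forward along topological minor morphisms, exactly as for the cellular chain module of Example \ref{example:cellular chains}. (Reconciling Abrams' subdivision hypothesis with strict functoriality may force one to pass to a labeled refinement $\GTM_{k,Q}$ and to invoke the well-quasi-ordered form of Conjecture \ref{conjecture:crc}.) The generators in the degrees relevant to $H_i$ have first Betti number at most some $N=N(i,n)$, so for $k>N$ they still lie in $\GTM_k$; restricting the complex to $\GTM_k$, over which it remains finitely generated in each relevant degree, and using that $\GTM_k$ is Noetherian by the categorical Robertson conjecture, we conclude that the subquotient $H_i(F_n(\bullet))|_{\GTM_k}$ is finitely generated over $\GTM_k$.

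It remains to see that the filtration of $H_i(F_n(\bullet))$ by first Betti number is finite: writing $F^m$ for the sub-$\GTM$-module generated by the objects $G$ with $b_1(G)\le m$, the claim is that $F^{m_0}=H_i(F_n(\bullet))$ for some $m_0=m_0(i,n)$, which we may take at least $N$. This is where the categorical graph minor conjecture (Conjecture \ref{conjecture:cgmt}) enters. Granting it, $\GM$ is Noetherian, and by the work of Miyata--Proudfoot--Ramos on generation of graph-configuration-space homology by minors, $H_i(F_n(\bullet))$ is a subquotient of a finitely generated $\GM$-module, hence finitely generated over $\GM$ and so generated by finitely many graphs, which necessarily have bounded first Betti number. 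Converting this into a statement about the $\GTM$-module $H_i(F_n(\bullet))$ --- by factoring each minor relation through the intermediate subgraph from which the minor is obtained by contraction, and checking compatibility of the induced maps on homology --- yields $F^{m_0}=H_i(F_n(\bullet))$.

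Finally, one assembles the pieces. Since $R_{m_0+1}$ has first Betti number $m_0+1$, every $G$ with $b_1(G)\le m_0$ lies in $\GTM_{m_0+1}$, so $H_i(F_n(\bullet))=F^{m_0}$ is generated over $\GTM$ by objects of the full subcategory $\GTM_{m_0+1}$. By the second step (using $m_0\ge N$), $H_i(F_n(\bullet))|_{\GTM_{m_0+1}}$ is finitely generated, hence generated by a finite set of objects $G_1,\dots,G_r$ by Lemma \ref{lem:object finite generation}; fullness of $\GTM_{m_0+1}\hookrightarrow\GTM$ makes generation transitive, so $H_i(F_n(\bullet))$ is generated over $\GTM$ by $\{G_1,\dots,G_r\}$, and since each $H_i(F_n(G))$ is a finitely generated abelian group, Lemma \ref{lem:object finite generation} upgrades this to finite generation over $\GTM$, and hence over $\TGI$ by the first step. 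I expect the main obstacle to be the second step --- not the invocation of Conjecture \ref{conjecture:crc}, but the construction of a model for $F_n(G)$ that is at once functorial for topological minor morphisms and assembled from boundedly many local pieces; the passage from minor-generation to subgraph-generation in the third step (compare the remarks on functoriality for minors in \cite{ADK}) is delicate as well, though it rests on existing work.
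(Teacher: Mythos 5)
Your outline is the paper's argument: reduce from $\TGI$ to $\GTM$ via Lemma \ref{lem:full and essentially surjective}, prove finite generation over each $\GTM_k$ from Abrams' discrete model plus Conjecture \ref{conjecture:crc} (this is Theorem \ref{thm:truncated fg}), bound the Betti filtration using the $\GM$-functoriality of \cite{MPR} plus Conjecture \ref{conjecture:cgmt} (Corollary \ref{cor:finite filtrations}), and splice the two filtrations together via $B_g\subseteq R_{g+1}$ (Lemma \ref{lem:filtration inclusions}). Two remarks on the framing. First, your claim that $\GTM\to\TGI$ is full is false --- a path with three vertices is isomorphic in $\TGI$ to a single edge, but no topological minor morphism between them exists, since $\rho_V$ must be an injection on vertices --- but this does no harm, because the direction of Lemma \ref{lem:full and essentially surjective} you need uses only essential surjectivity. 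Second, your final assembly via generation by objects of the full subcategory $\GTM_{m_0+1}$ and Lemma \ref{lem:object finite generation} is a correct variant of the paper's route through the counit surjection of Lemma \ref{lem:robertson image} and Lemma \ref{lem:kan extension fg}; yours additionally uses that each $H_i(F_n(G))$ is a finitely generated abelian group, which holds since $D_n(G)$ is a finite complex.

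The one genuine gap is the one you flag yourself: reconciling Abrams' subdivision hypothesis with functoriality in step two. The resolution is not a better model and not a labeled refinement $\GTM_{k,Q}$; the paper stays with $D_n$ and exploits the fact that subdivisions are already morphisms of $\GTM_k$. Concretely: $C_*^{CW}(D_n(\bullet))$ is a finitely generated differential graded $\GTM_k$-module outright (its $i$-chains are generated by the forest with $i$ isolated edges and $n-i$ isolated vertices --- note these generators lie in every $\GTM_k$ for the trivial reason that they are forests, not because of any Betti number bound $N(i,n)$ as you suggest), so Noetherianity makes $H_i(D_n(\bullet))$ finitely generated, and likewise its submodule generated by graphs sufficiently subdivided for $n$, say by objects $G_1,\dots,G_r$. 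One then chooses for each $G_j$ a minimal simplicial representative $\widetilde G_j$ of its homeomorphism type and runs a diagram chase whose crux is that for any subdivision $G\to G'$ the induced map $\GTM_k(\widetilde G_j,G)\to\GTM_k(\widetilde G_j,G')$ is surjective by minimality of $\widetilde G_j$; combined with the isomorphisms $H_i(D_n(G'))\cong H_i(F_n(G'))$ of Theorem \ref{thm:abrams} and $H_i(F_n(G))\cong H_i(F_n(G'))$, this pulls the generators back from the subdivided world to an arbitrary object of $\GTM_k$. Without some such de-subdivision argument, finite generation of $H_i(D_n(\bullet))$ says nothing about $H_i(F_n(G))$ for insufficiently subdivided $G$, so this step cannot be elided.
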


In this section, we will prove the following related result, deferring the bridge from $\GTM_k$ to $\GTM$ and thence to $\TGI$ to Section \ref{section:filtrations}.

\begin{theorem}\label{thm:truncated fg}
If Conjecture \ref{conjecture:crc} holds, then, for any natural numbers $i$, $k$, and $n$, the $\GTM_k$-module $H_i(F_n(\bullet))$ is finitely generated.
\end{theorem}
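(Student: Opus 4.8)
The plan is to realize the $\GTM_k$-module $M\colon G\mapsto H_i(F_n(G))$ as the homology of a chain complex of finitely generated modules over a Noetherian category of graphs; the input is Abrams' combinatorial model for configuration spaces, and the output is an invocation of Conjecture \ref{conjecture:crc}.

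I would begin with the chain-level model. By Abrams' theorem, when $G$ is sufficiently subdivided relative to $n$ the inclusion into $F_n(G)$ of the discretized configuration space $D_n(G)\subseteq G^n$ --- the subcomplex of the product cell structure spanned by cubes $\sigma_1\times\cdots\times\sigma_n$ with pairwise disjoint closed faces --- is a homotopy equivalence. The chains $C_\ast(D_n(\bullet))$ form a $\GTM_k$-module in exactly the manner of Example \ref{example:cellular chains}: a topological minor morphism sends a cell to the tensor product of the images of its factors, an edge factor going to the sum of the edges of its image path and a vertex factor to its image vertex, and conditions (3) and (4) in the definition of such a morphism guarantee that disjointness of closed faces is preserved. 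The difficulty is that $C_\ast(D_n(\bullet))$ has the right homology only after restriction to the full subcategory of sufficiently subdivided graphs, whose inclusion into $\GTM_k$ is not essentially surjective, so this restriction cannot be leveraged directly. I would instead build a subdivision-invariant version $\widehat{C}_\ast(\bullet)$ --- a complex of $\GTM_{k,Q}$-modules with $Q=\mathbb{N}$, where the extra label at a vertex records its valence (which can only increase under a topological minor morphism) --- designed so that it is functorial for all topological minor morphisms, agrees with $C_\ast(D_n(\bullet))$ on sufficiently subdivided graphs, and hence has homology naturally isomorphic to $H_i(F_n(\bullet))$. Producing this model --- functorial, subdivision-stable, with finitely generated terms, and with the correct homology --- is where essentially all the work lies, and I expect it to be the main obstacle; a complex built from local combinatorial data at the vertices is a natural candidate.

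Granting the model, the rest is formal. Each $\widehat{C}_p(\bullet)$ is finitely generated over $\GTM_{k,Q}$: if $Y_p$ denotes the minimally labelled forest consisting of $p$ disjoint edges and $n-p$ isolated vertices, with its $n$ components linearly ordered, then $Y_p\in\GTM_{k,Q}$ --- a forest has no topological $R_k$-minor --- and evaluation identifies morphisms $Y_p\to G$ with $n$-tuples of pairwise disjoint arcs and vertices of $G$, so the free module $\GTM_{k,Q}(Y_p,\bullet)$ surjects onto $\widehat{C}_p(\bullet)$; here one uses that $\widehat{C}_p(G)$ is a finitely generated abelian group because $G$ is a finite graph (cf.\ Lemma \ref{lem:object finite generation}). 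By Conjecture \ref{conjecture:crc} the category $\GTM_{k,Q}$ is Noetherian over $\mathbb{Z}$, so $\ker\partial_i\subseteq\widehat{C}_i(\bullet)$ is finitely generated and $\mathrm{im}\,\partial_{i+1}$ is finitely generated as a quotient of $\widehat{C}_{i+1}(\bullet)$; hence the $\GTM_{k,Q}$-module $H_i(\widehat{C}_\ast(\bullet))\cong H_i(F_n(\bullet))$ is finitely generated. Finally, the forgetful functor $\GTM_{k,Q}\to\GTM_k$ is essentially surjective and sends $H_i(F_n(\bullet))$ to its pullback, so Lemma \ref{lem:full and essentially surjective} shows $H_i(F_n(\bullet))$ is finitely generated over $\GTM_k$, completing the proof.
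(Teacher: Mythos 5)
Your first paragraph and the formal endgame (finitely generated terms, Noetherianity of the category, homology as a subquotient) match the paper's strategy, but the proposal has a genuine gap exactly where you flag it: the ``subdivision-invariant version $\widehat{C}_\ast(\bullet)$'' is never constructed, only wished for. Everything in the argument hinges on producing a complex of $\GTM_{k,Q}$-modules that is functorial for \emph{all} topological minor morphisms, has finitely generated terms, and computes $H_i(F_n(\bullet))$ on every graph, not just sufficiently subdivided ones. This is not a routine modification of the \'{S}wi\k{a}tkowski complex: in the paper that complex is only ever given a $\GE$-module structure (full embeddings), and even the extension of the homology functor to edge contractions is singled out as a surprising phenomenon requiring separate work. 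As written, the proof defers its central difficulty to an unconstructed object, so it does not establish the theorem.

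The paper avoids needing any such model. It applies the Noetherianity hypothesis directly to $H_i(D_n(\bullet))$, which is a subquotient of the finitely generated module $C_i^{CW}(D_n(\bullet))$ (Lemma \ref{lem:discrete fg}), and then, rather than repairing $D_n$ on insufficiently subdivided graphs, it passes to the submodule of $H_i(D_n(\bullet))$ generated by the sufficiently subdivided objects. Noetherianity again gives a finite set of generating objects $\{G_j\}$; each is replaced by a minimal simplicial representative $\widetilde G_j$ of its subdivision class. The key non-formal input is then that for any $G$ and any subdivision $G\to G'$ with $G'$ sufficiently subdivided, the induced map $\GTM_k(\widetilde G_j,G)\to\GTM_k(\widetilde G_j,G')$ is surjective by minimality of $\widetilde G_j$; a diagram chase combining this with Abrams' equivalence $H_i(D_n(G'))\cong H_i(F_n(G'))$ and the subdivision invariance of $H_i(F_n(\bullet))$ transports generation from the subdivided graphs back to all of $\GTM_k$. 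If you want to complete your argument, either carry out this minimal-representative maneuver in place of constructing $\widehat{C}_\ast$, or be prepared to build and verify the subdivision-stable model in full, which is a substantial project in its own right.
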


Our approach follows a standard two-step strategy. First, realize the module in question as the homology of a differential graded module that is obviously finitely generated. Second, appeal to a categorical Noetherianity statement. Toward the first goal, we recall a cellular model due to Abrams \cite{A}.

\begin{definition}
The $n$th \emph{discretized configuration space} of the graph $G$, denoted $D_n(G)$, is the largest subcomplex of $G^n$ contained in the configuration space $F_n(G)$.
\end{definition}

Explicitly, the open cells of $D_n(G)$ are products of vertices and open edges of $G$ with pairwise disjoint closures.

The utility of this model lies in the following result. As a matter of terminology, we say that a graph satisfying the hypothesis of the theorem is \emph{sufficiently subdivided for $n$}.

\begin{theorem}[Abrams]\label{thm:abrams}
Fix $n > 1$ and a graph $G$. The inclusion $D_n(G)\subseteq F_n(G)$ is a homotopy equivalence provided every path and every cycle in $G$ involves at least $n+1$ edges.
\end{theorem}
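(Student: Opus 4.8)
\textbf{Proof strategy for Abrams' theorem (Theorem \ref{thm:abrams}).}
The plan is to exhibit $D_n(G)$ as a deformation retract of $F_n(G)$ by building an explicit, particle-by-particle retraction, controlled by the assumption that every path and cycle of $G$ has at least $n+1$ edges. The guiding picture is that a configuration in $F_n(G)$ can fail to lie in $D_n(G)$ only because some particles share an edge or sit too close to a shared vertex; the subdivision hypothesis provides enough ``room'' on $G$ that one can always slide the offending particles onto distinct vertices without collisions, and do so continuously. First I would fix, once and for all, a choice of ``safe region'' assignment: for a point $x\in G$ lying in the interior of an edge $e$, record the pair of vertices bounding $e$, and for $x$ a vertex, record $x$ itself; the local model near each vertex $v$ of valence $d$ is the cone on $d$ points, i.e.\ $d$ half-open intervals glued at $v$.

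The core of the argument is a local retraction near a single vertex. I would first handle the ``star neighborhood'' case: let $\st(v)$ be the union of the (closed) half-edges at $v$, subdivided so each carries at least $\lceil (n+1)/2\rceil$ edges on each side — more simply, work with the hypothesis that each edge of $G$ is itself subdivided into $\geq n+1$ sub-edges, which is exactly what ``every path and cycle involves at least $n+1$ edges'' buys us after reducing to the minimal counterexample. Given $m\leq n$ particles in $\st(v)$, I would push each particle monotonically away from $v$ along its own half-edge until it reaches the first available lattice vertex not occupied by another particle; because each half-edge has at least $n$ interior lattice points and there are at most $n$ particles total, there is always an unoccupied slot, and choosing ``the first available slot, ordered by distance from $v$, processing particles in order of increasing distance from $v$'' makes the assignment canonical and continuous in the input configuration. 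The straight-line homotopy realizing this push stays within $F_n(G)$ because particles on distinct half-edges never meet, and particles on the same half-edge preserve their cyclic/linear order under a monotone push. Iterating over the vertices of $G$ (they can be treated independently once neighborhoods are taken small relative to the subdivision scale, since the corresponding local retractions have disjoint supports after one initial global ``spreading'' step that moves every particle into some star neighborhood) assembles a global deformation retraction $F_n(G)\to D_n(G)$. One must also check that the resulting map restricts to the identity on $D_n(G)$ — immediate, since configurations already at lattice vertices with disjoint closures are fixed — and that it is continuous globally, which follows from the standard gluing lemma once the local formulas are seen to agree on overlaps.

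I would organize the write-up as: (1) reduce to a minimal-length counterexample and thereby to the clean hypothesis ``each edge subdivided $\geq n+1$ times''; (2) construct the canonical ``nearest available lattice point'' assignment and prove it is continuous and collision-free; (3) build the straight-line homotopy and verify it lands in $F_n(G)$ throughout; (4) globalize by first spreading particles into star neighborhoods, then applying the commuting family of local retractions; (5) conclude that $D_n(G)\hookrightarrow F_n(G)$ is a homotopy equivalence, in fact a deformation retract.

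\textbf{Expected main obstacle.} The delicate point is the globalization in step (4): the naive local retractions near different vertices do not a priori have disjoint supports, and a particle near an edge midpoint is ``equidistant'' in a way that can break continuity or canonicity of the slot assignment. Handling this requires either (a) a careful partition-of-unity-style interpolation between the competing local pushes, or (b) the cleaner route of first performing one global, order-preserving ``spreading'' homotopy — e.g.\ a discretized gradient flow for a Morse-type function measuring total proximity of particles — that deposits the whole configuration into a chamber where a single unambiguous vertex is assigned to each particle, after which the remaining retraction is an honest product of one-dimensional monotone maps. Getting this spreading step to be simultaneously continuous, collision-free, and compatible with the eventual lattice-point landing is where the real work lies; the subdivision hypothesis is used precisely to guarantee that enough empty lattice points always remain to absorb the particles during this flow.
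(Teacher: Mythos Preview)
The paper does not prove this theorem; it is quoted as a result of Abrams \cite{A} and used as a black box. There is therefore no ``paper's own proof'' to compare against.

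That said, a few remarks on your sketch. First, your reduction in step~(1) misreads the hypothesis: ``every path and every cycle involves at least $n+1$ edges'' in Abrams' formulation refers to edge-paths between \emph{essential} vertices (those of valence $\neq 2$) and to topological cycles, not to arbitrary paths in the sense of the paper's Definition (a single edge is already a path of length one, so the literal reading is vacuous). It does \emph{not} imply that every individual edge of the underlying topological graph is subdivided $\geq n+1$ times; for instance, a tripod with each arm of length $n+1$ satisfies Abrams' condition but has arms with only $n+1$ edges total, not $n+1$ sub-edges per original edge. Your local ``push to the nearest available lattice vertex'' step therefore cannot assume each half-edge carries $n$ interior lattice points; you must instead use that the total length of each arm or cycle suffices to absorb all $n$ particles.

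Second, and more seriously, the obstacle you flag in step~(4) is the entire content of the theorem, and neither option~(a) nor~(b) as stated resolves it. The ``nearest available lattice point, processing particles by distance from $v$'' rule is not continuous: as a particle crosses a lattice point, or as two particles on the same half-edge exchange rank by distance from $v$, the target assignment jumps. Abrams' actual argument (and the later streamlined versions of Prue--Scrimshaw and others) does not attempt a direct slot-assignment map; instead one builds the retraction inductively on the CW skeleta of $G^n$, or equivalently uses a carefully ordered sequence of elementary collapses, verifying at each stage that the subdivision hypothesis guarantees the relevant free face exists. Your ``discretized gradient flow'' idea in option~(b) is closer in spirit, but making it precise requires exactly this inductive or discrete-Morse framework rather than a single global spreading homotopy.
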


At the chain level, Abrams' model fits easily into our categorical framework. Indeed, the assignment $G\mapsto C_*^{CW}(D_n(G))$ canonically extends to a differential graded $\GTM$-module; indeed, one checks esaily that it is a submodule of the module $G\mapsto C_*^{CW}(G)^{\otimes n}\cong C_*^{CW}(G^n)$ (we use the module structure of Example \ref{example:cellular chains}). In particular, we obtain a $\GTM$-action on homology. The following result concerning this module structure is essentially immediate.

\begin{lemma}
For $i,n\geq0$, the canonical map $H_i(D_n(\bullet))\to H_i(F_n(\bullet))$ is a map of $\GTM$-modules.
\end{lemma}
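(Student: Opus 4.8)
The plan is to exhibit the asserted naturality at the level of spaces, before passing to homology; the two sources of imprecision in the picture---the choice of realization of a topological minor morphism by an honest topological embedding, and the attendant failure of strict functoriality---do no harm on homology, since $H_i(F_n(-))$ factors through the homotopy category. Concretely, I would fix a topological minor morphism $\rho\colon G\to G'$, choose a realizing topological embedding $\widehat\rho\colon G\to G'$---carrying each vertex $v$ to $\rho_V(v)$ and each closed edge $\overline e$ homeomorphically onto the arc traced out by the path $\rho_E(e)$---and track what it does to the discretized models $D_n$.

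First I would note that $\widehat\rho$ is a cellular map of one-dimensional complexes and that, reading off the degree-one cellular chain map, $C_*^{CW}(\widehat\rho)$ is precisely the structure map of the $\GTM$-module $C_*^{CW}(\bullet)$ of Example \ref{example:cellular chains}; passing to $n$-fold powers and restricting, $C_*^{CW}((\widehat\rho)^n)$ agrees on the submodule $C_*^{CW}(D_n(\bullet))\subseteq C_*^{CW}((\bullet)^n)$ with its $\GTM$-structure map. The crux is the claim that $(\widehat\rho)^n$ carries $D_n(G)$ into $D_n(G')$, which I would check cell by cell: on a cell $\overline{\sigma_1}\times\cdots\times\overline{\sigma_n}$ of $D_n(G)$ the closed cells $\overline{\sigma_j}$ are pairwise disjoint, and conditions (3) and (4) in the definition of a topological minor morphism guarantee that the images $\widehat\rho(\overline{\sigma_j})$ remain pairwise disjoint---condition (3) ensures the image of a vertex lies on the arc through an edge exactly when that vertex is an endpoint of the edge, and condition (4) ensures the arcs through two edges with no common endpoint are disjoint. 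Since each $\widehat\rho(\overline{\sigma_j})$ is then a subcomplex of $G'$ and these are pairwise disjoint, their product is a subcomplex of $(G')^n$ contained in $F_n(G')$, hence in $D_n(G')$. Thus $(\widehat\rho)^n$ restricts to a cellular map $D_n(G)\to D_n(G')$ which, by the preceding observation, realizes the $\GTM$-structure map of $H_i(D_n(\bullet))$ on homology.

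Finally, I would observe that the square
\[
\xymatrix{
D_n(G)\ar[r]\ar[d]_{(\widehat\rho)^n}&F_n(G)\ar[d]^{F_n(\widehat\rho)}\\
D_n(G')\ar[r]&F_n(G')
}
\]
with horizontal maps the canonical inclusions, commutes on the nose, both composites sending $(x_1,\dots,x_n)$ to $(\widehat\rho(x_1),\dots,\widehat\rho(x_n))$. Applying $H_i$ turns this into the desired naturality square: the left column becomes the $\GTM$-structure map of $H_i(D_n(\bullet))$, the rows become the canonical comparison maps, and the right column becomes the $\GTM$-structure map of $H_i(F_n(\bullet))$---here one uses that $H_i(F_n(-))$ depends on $\widehat\rho$ only through its isotopy class, which is by construction the image of $\rho$ under $\GTM\to\TGI$. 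I expect the only step needing genuine (if short) verification to be the containment $(\widehat\rho)^n(D_n(G))\subseteq D_n(G')$, i.e.\ the interaction of conditions (3)--(4) with disjointness of cells; everything else is formal, and the fact that $\widehat\rho$---hence the left column---is well defined only up to isotopy and composes only up to isotopy does no harm precisely because the comparison with $F_n$ has already been pushed to homology.
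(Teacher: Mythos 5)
Your argument is correct and is precisely the verification the paper declares ``essentially immediate'' and omits: the chain-level $\GTM$-structure on $C_*^{CW}(D_n(\bullet))$, defined as a submodule of $C_*^{CW}(\bullet)^{\otimes n}$, is realized by the cellular map $(\widehat\rho)^n$, whose image lands in $D_n(G')$ by conditions (1), (3), (4), and the strictly commuting square of spaces then yields naturality on homology. The only points you elide are minor and of the same order as what the paper itself suppresses (injectivity of $\rho_V$ for vertex--vertex disjointness, and the orientation bookkeeping identifying the degree-one cellular chain map of $\widehat\rho$ with the path-to-sum-of-edges map of Example \ref{example:cellular chains}).
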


For our purposes, the advantage of the discretized configuration space lies in the following observation.

\begin{lemma}\label{lem:discrete fg}
For any natural numbers $i$, $k$, and $n$, the $\GTM_k$-module $C_i^{CW}(D_n(\bullet))$ is finitely generated.
\end{lemma}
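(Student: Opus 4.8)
The plan is to exploit the explicit description of the cells of $D_n(G)$. For $0 \le i \le n$, an $i$-cell of $D_n(G)$ is a tuple $(c_1,\dots,c_n)$ in which exactly $i$ of the $c_j$ are open edges of $G$ and the remaining $n-i$ are vertices, subject to the requirement that the closures $\bar c_1,\dots,\bar c_n$ be pairwise disjoint. The union of these closures is therefore a subgraph $H\subseteq G$ which is a disjoint union of $i$ edges, no two sharing a vertex, together with $n-i$ isolated vertices; up to isomorphism this graph, call it $H_{n,i}$, depends only on $n$ and $i$. (If $i>n$ then $C_i^{CW}(D_n(G))=0$ for every $G$ and there is nothing to prove, so assume $i\le n$.)

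First I would record that $H_{n,i}$ is an object of $\GTM_k$: being a disjoint union of edges and points it is acyclic, hence admits no topological minor containing a cycle, and in particular not the Robertson chain $R_k$. Moreover, the inclusion $H_{n,i}\hookrightarrow G$ of a subgraph is a topological minor morphism --- it lies in the copy of $\GE$ inside $\GTM$ consisting of those $\rho$ for which $\rho_E$ factors through $E_G$ --- and since $\GTM_k$ is a full subcategory of $\GTM$, it is also a morphism of $\GTM_k$.

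Next I would observe that every $i$-cell of $D_n(G)$ is a pushforward. Given $c=(c_1,\dots,c_n)$ as above with support $H\cong H_{n,i}$, the same tuple defines an $i$-cell $c'$ of $D_n(H)$, and under the $\GTM$-module structure on $C_*^{CW}(D_n(\bullet))$ --- which it inherits as a submodule of $G\mapsto C_*^{CW}(G)^{\otimes n}$, cf.\ Example \ref{example:cellular chains} --- the morphism $H\hookrightarrow G$ carries $c'$ to $\pm c$. Since $D_n(H_{n,i})$ is a finite complex, it has only finitely many $i$-cells; fixing an orientation for each, the resulting map of $\GTM_k$-modules
\[
\bigoplus_{c'}\GTM_k(H_{n,i},\bullet)\longrightarrow C_i^{CW}(D_n(\bullet)),
\]
which sends the canonical generator of the $c'$-summand over $G$ to the image of $c'$ along the corresponding morphism, has image over each $G$ a subgroup containing every $i$-cell up to sign, hence all of $C_i^{CW}(D_n(G))$. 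This exhibits $C_i^{CW}(D_n(\bullet))$ as finitely generated. (Alternatively, $C_i^{CW}(D_n(\bullet))$ is generated by the single object $H_{n,i}$ and takes finitely generated abelian-group values, so Lemma \ref{lem:object finite generation} applies.)

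The only point I expect to require care is the orientation bookkeeping in the last step --- checking that the cellular-chain module structure restricted to $D_n$ really does send the tautological cell of $D_n(H_{n,i})$ to a sign times the prescribed cell of $D_n(G)$ --- but this is immediate from the explicit functoriality of Example \ref{example:cellular chains}, under which a genuine edge is sent to itself. There is no substantive obstacle here: the content lies entirely in the observation that the supports of $i$-cells realize a single, bounded isomorphism type, which automatically lies in $\GTM_k$.
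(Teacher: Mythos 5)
Your proposal is correct and is essentially the paper's own argument: the paper also generates $C_i^{CW}(D_n(\bullet))$ by the single graph $G_{i,n-i}$ consisting of $i$ isolated edges and $n-i$ isolated vertices, observes that the action map out of this object is surjective by inspection of the cells, and notes that this acyclic graph lies in $\GTM_k$. Your added detail about supports of cells and orientations just makes explicit what the paper dismisses as ``by inspection.''
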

\begin{proof}
Write $G_{i,n-i}$ for the graph with $i$ isolated edges and $n-i$ isolated vertices. By inspection, the action map \[\GTM(G_{i,n-i},G)\otimes C_i^{CW}(D_n(G_{i,n-i}))\to C_i^{CW}(D_n(G))\] is a surjection for every $G\in \GTM$. Since $C_i^{CW}(D_n(G_{i,n-i}))\cong\mathbb{Z}^{i!}$, and since $G_{i,n-i}$ lies in the full subcategory $\GTM_k$, the claim follows.
\end{proof}

\begin{proof}[Proof of Theorem \ref{thm:truncated fg}]
Assuming Conjecture \ref{conjecture:crc}, it follows from Lemma \ref{lem:discrete fg} that the $\GTM_k$-module $H_i(D_n(\bullet))$, as a subquotient of $C_i^{W}(D_n(\bullet))$, is finitely generated. Let $M\subseteq H_i(D_n(\bullet))$ denote the submodule generated by the graphs in $\GTM_k$ that are sufficiently subdivided for $n$. Again invoking Conjecture \ref{conjecture:crc}, we conclude that $M$ is finitely generated, hence generated by the set objects $\{G_j\}_{j=1}^r$ of $\GTM_k$ by Lemma \ref{lem:object finite generation}. 

Choosing subdivisions $\widetilde G_j\to G_j$ with $\widetilde G_j$ a minimal simplicial representative, we claim that the set $\{\widetilde G_j'\}_{j=1}^r$ generates $H_i(F_n(\bullet))$ as a $\GTM_k$-module. To this end, fix an object $G\in \GTM_k$ and a subdivision $G\to G'$ with $G'$ sufficiently subdivided for $n$. We obtain the following commutative diagram:
{\[\xymatrix{
\displaystyle\bigoplus_{j=1}^r\GTM_k(\widetilde G_j,G)\otimes H_i(F_n(\widetilde G_j))\ar[r]\ar[d]&H_i(F_n(G))\ar[d]^-\cong\\
\displaystyle\bigoplus_{j=1}^r\GTM_k(\widetilde G_j,G')\otimes H_i(F_n(\widetilde G_j))\ar[d]_-\cong\ar[r]&H_i(F_n(G'))\\
\displaystyle\bigoplus_{j=1}^r\GTM_k( \widetilde G_j,G')\otimes H_i(F_n(G_j))\ar[r]&H_i(F_n(G')\ar@{=}[d]\ar@{=}[u]\\
\displaystyle\bigoplus_{j=1}^r\GTM_k( G_j,G')\otimes H_i(F_n(G_j))\ar[u]\ar[r]&H_i(F_n(G')\\
\displaystyle\bigoplus_{j=1}^r\GTM_k(G_j,G')\otimes H_i(D_n(G_j))\ar[u]\ar[r]&H_i(D_n(G')).\ar[u]_-\cong
}\]} The lower righthand vertical map is an isomorphism by Theorem \ref{thm:abrams}, since $G'$ is sufficiently subdivided, and the other indicated isomorphisms are induced by subdivisions. Since $G$ is arbitrary, the proof will be complete upon establishing that the top map is a surjection. Since the bottom map is a surjection by our earlier discussion, a diagram chase shows that it suffices to check that the subdivision $G\to G'$ induces a surjection $\GTM_k(\widetilde G_j,G)\to \GTM_k(\widetilde G_j, G')$ for each $j$, which follows from our minimality assumption.
\end{proof}

\section{Geometric filtrations}\label{section:filtrations}

In this section, we close the gap between finite generation over $\TGI$, our desired conclusion, and finite generation over $\GTM_k$ for each $k$, as discussed in the previous section.

\begin{definition}
Fix natural numbers $i$, $n$, and $k$ and $g\geq0$.
\begin{enumerate}
\item The $k$th \emph{Robertson stage} of $H_i(F_n(\bullet))$ is the $\GTM$-submodule $R_kH_i(F_n(\bullet))$ generated by the objects of $\GTM_k$.
\item The $g$th \emph{Betti stage} of $H_i(F_n(\bullet))$ is the $\GTM$-submodule $B_gH_i(F_n(\bullet))$ generated by graphs with first Betti number at most $g$.
\end{enumerate}
\end{definition}

\begin{remark}
There is an obvious analogue of the Betti filtration, in which $H_i(F_n(\bullet))$ is instead considered as a $\TGI$-module. These filtrations coincide.
\end{remark}

\begin{lemma}\label{lem:filtration inclusions}
Fix natural numbers $i$, $n$, and $k$ and $g\geq0$. The following containments hold.
\begin{enumerate}
\item $R_{k}H_i(F_n(\bullet))\subseteq R_{k+1}H_i(F_n(\bullet))$
\item $B_gH_i(F_n(\bullet))\subseteq B_{g+1}H_i(F_n(\bullet))$
\item $B_gH_i(F_n(\bullet))\subseteq R_{g+1}H_i(F_n(\bullet))$
\end{enumerate}
\end{lemma}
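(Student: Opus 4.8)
The plan is to verify each of the three containments directly from the definitions of the Robertson and Betti stages as submodules of $H_i(F_n(\bullet))$ generated by classes arising from certain families of graphs.

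For part (1), the inclusion $R_kH_i(F_n(\bullet))\subseteq R_{k+1}H_i(F_n(\bullet))$ should follow immediately from the containment $\GTM_k\subseteq\GTM_{k+1}$: a graph not admitting $R_k$ as a topological minor certainly does not admit $R_{k+1}$ as a topological minor, since a topological minor morphism $R_{k+1}\to G$ would restrict, after deleting a pendant pair of triangles (or rather, after noting that $R_k$ is itself a topological minor of $R_{k+1}$), to one from $R_k$. So every generating object of $R_kH_i(F_n(\bullet))$ is among the generating objects of $R_{k+1}H_i(F_n(\bullet))$, and the submodule generated by a smaller set is contained in the submodule generated by a larger one. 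I should double-check the claim that $R_k$ is a topological minor of $R_{k+1}$; this is the only potential subtlety, and it is essentially visual — $R_{k+1}$ is $k+1$ triangles chained at vertices, and forgetting the last triangle (collapsing it appropriately, or rather just taking the subgraph consisting of the first $k$ triangles) realizes $R_k$ as a subgraph, hence a topological minor.

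Part (2) is the most routine: a graph with first Betti number at most $g$ has first Betti number at most $g+1$, so the generating set for $B_g$ is contained in that for $B_{g+1}$, and the same monotonicity of "submodule generated by" applies.

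Part (3) is the one requiring an actual graph-theoretic input, and I expect it to be the main (mild) obstacle. The claim is that a graph $G$ with $b_1(G)\leq g$ does not admit $R_{g+1}$ as a topological minor, so that every generator of $B_gH_i(F_n(\bullet))$ is a generator of $R_{g+1}H_i(F_n(\bullet))$. The key point is that the first Betti number is monotone under the topological minor relation — equivalently, under subdivision and passage to subgraphs, $b_1$ can only stay the same or decrease — and that $b_1(R_{g+1})=g+1$. Indeed $R_{g+1}$ is homotopy equivalent to a wedge of $g+1$ circles. So if $R_{g+1}$ were a topological minor of $G$, then a subdivision of $R_{g+1}$ would be homeomorphic to a subgraph of $G$; subdivision preserves $b_1$, and a subgraph has first Betti number at most that of the ambient graph (the inclusion of a subcomplex induces an injection on $H_1$ up to the rank count, or more simply $b_1$ is additive over connected components and monotone under adding cells), giving $g+1=b_1(R_{g+1})\leq b_1(G)\leq g$, a contradiction. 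Hence $G\in\GTM_{g+1}$, and the inclusion of generating sets yields the inclusion of submodules. I would state the monotonicity of $b_1$ under the topological minor relation as a one-line observation (or cite it as standard) rather than belabor it.
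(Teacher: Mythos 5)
Your proof is correct and follows essentially the same route as the paper's: parts (1) and (2) are containments of generating sets (with your verification that $R_k$ is a topological minor of $R_{k+1}$ filling in what the paper calls ``immediate''), and part (3) rests on monotonicity of the first Betti number under topological minor morphisms together with $b_1(R_{g+1})=g+1$. Incidentally, your Betti number count is the right one---the paper's own proof asserts that $R_k$ has first Betti number $k+1$ rather than $k$, an off-by-one slip, whereas the containment $B_g\subseteq R_{g+1}$ as stated is exactly what your correct count delivers.
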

\begin{proof}
The first two claims are immediate. For the third, note that the Robertson chain $R_k$ has first Betti number $k+1$, and topological minor morphisms are non-decreasing with respect to this number.
\end{proof}

Thus, we have two ascending, exhaustive filtrations of $H_i(F_n(\bullet))$, one contained in the other. It will be useful to have a second interpretation of one of these. In what follows, we write $\iota_k:\GTM_k\to \GTM$ for the inclusion.

\begin{lemma}\label{lem:robertson image}
The image of the counit $(\iota_k)_!\iota_k^*H_i(F_n(\bullet))\to H_i(F_n(\bullet))$ coincides with $R_kH_i(F_n(\bullet))$.
\end{lemma}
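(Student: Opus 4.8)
The plan is to identify both sides as the image of the same map, using the universal property of the left Kan extension. By definition, $R_k H_i(F_n(\bullet))$ is the $\GTM$-submodule generated by the objects of $\GTM_k$, which by the definition of the submodule generated by a set of objects is the image of the natural map
\[
\bigoplus_{G\in \GTM_k}\GTM(G,\bullet)\otimes H_i(F_n(G))\to H_i(F_n(\bullet)),
\]
where the sum ranges over (a skeleton of) the objects of $\GTM_k$. On the other hand, the counit $(\iota_k)_!\iota_k^*M\to M$ for $M = H_i(F_n(\bullet))$ can be computed via the coend formula for the left Kan extension: for a $\GTM$-module $M$, one has $(\iota_k)_!\iota_k^*M(\bullet)\cong \int^{G\in\GTM_k}\GTM(\iota_k(G),\bullet)\otimes M(\iota_k(G))$, and the counit is the canonical map out of this coend induced by the action maps $\GTM(G,\bullet)\otimes M(G)\to M(\bullet)$. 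Since a coend is a quotient of the corresponding direct sum $\bigoplus_{G\in\GTM_k}\GTM(\iota_k(G),\bullet)\otimes M(\iota_k(G))$, the image of the counit in $M(\bullet)$ coincides with the image of the map out of that direct sum — which is exactly the displayed map above. Hence the two images agree.

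First I would recall the coend description of $(\iota_k)_!$ and observe that the counit factors as the direct sum $\bigoplus_{G\in\GTM_k}\GTM(\iota_k(G),\bullet)\otimes \iota_k^*M(G)$ surjecting onto the coend, followed by the induced map from the coend to $M$; since the first map is surjective, the image of the counit equals the image of the composite. Second I would match this composite with the defining map for the submodule generated by the objects of $\GTM_k$, noting that $\iota_k^*M(G) = M(\iota_k(G)) = H_i(F_n(G))$ and that $\GTM(\iota_k(G),\bullet)$ is just $\GTM(G,\bullet)$ since $\iota_k$ is the inclusion of a full subcategory. Concluding that the images coincide gives the claim.

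There is essentially no obstacle here: this is a formal unwinding of definitions, and the only point requiring a modicum of care is the bookkeeping of passing from the abstract coend to the concrete direct-sum presentation so that the map being quotiented is manifestly the one appearing in the definition of the submodule generated by a set of objects. One could alternatively give a two-inclusion argument — every element in the image of the counit comes from some $\GTM(G,\bullet)\otimes M(G)$ with $G\in\GTM_k$ and hence lies in $R_k H_i(F_n(\bullet))$, and conversely each generator of $R_k H_i(F_n(\bullet))$ is manifestly hit by the counit via the component corresponding to that $G$ — but the coend argument is cleaner. I expect to present the coend version.
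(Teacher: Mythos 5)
Your proposal is correct and matches the paper's argument, which simply cites ``the standard expression for the left Kan extension as a coequalizer''---your coend/direct-sum unwinding is exactly that expression spelled out. The key observation in both cases is that the counit factors through the surjection from $\bigoplus_{G\in\GTM_k}\GTM(G,\bullet)\otimes H_i(F_n(G))$ onto the coend, so its image equals the image of the map defining $R_kH_i(F_n(\bullet))$.
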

\begin{proof}
The claim is immediate from the standard expression for the left Kan extension as a coequalizer.
\end{proof}

Somewhat surprisingly, as first observed in \cite{ADK}, the homology of configuration spaces of graphs enjoys exceptional functoriality for edge contractions. Concerning this extended functoriality, we have the following result of \cite{MPR}.

\begin{proposition}\label{prop:contractions}
Fix natural numbers $i$ and $n$. The $\GE$-module structure on $H_i(F_n(\bullet))$ extends to a $\GM$-module structure. If Conjecture \ref{conjecture:cgmt} holds, then this $\GM$-module is finitely generated.
\end{proposition}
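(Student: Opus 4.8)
# Proof Proposal for Proposition \ref{prop:contractions}

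The plan is to treat the two assertions separately. For the first, the claim that the $\GE$-module structure on $H_i(F_n(\bullet))$ extends along $\GE\to \GM$ to a $\GM$-module structure is precisely the content of the extended functoriality established in \cite{ADK} and \cite{MPR}; I would simply cite this, perhaps recalling the mechanism. The key geometric input is that edge contraction, while not a continuous map of configuration spaces in any naive sense, induces a well-defined map on homology. The cleanest way to see this in our setting is to note that a minor morphism $G \to G'$ in $\GM$ factors (up to the conventions of \cite{MPR}) as a composite of an embedding and a contraction of a single edge, and for contraction of a non-loop edge $e$ one uses the homotopy equivalence $D_n(G) \simeq D_n(G/e)$ valid after sufficient subdivision—or, more functorially, the cellular-chain-level comparison of Abrams models—to produce the induced map; one then checks this is compatible with composition, which is the substance of \cite{MPR}. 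Since the statement is quoted as a result of \emph{loc. cit.}, I would not reprove it.

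For the second assertion—finite generation of the $\GM$-module $H_i(F_n(\bullet))$ assuming Conjecture \ref{conjecture:cgmt}—I would run the same two-step strategy used in the proof of Theorem \ref{thm:truncated fg}, now over $\GM$ in place of $\GTM_k$. First, I would observe that the discretized chain module $G \mapsto C_i^{CW}(D_n(G))$ extends to a $\GM$-module (via the cellular-chain functoriality for contractions, noting $C_*^{CW}(D_n(\bullet))$ sits inside $C_*^{CW}(\bullet)^{\otimes n}$, and the latter carries a $\GM$-structure because $C_*^{CW}$ does, by \cite{MPR} or by direct inspection of the effect of contracting an edge on cellular chains in degrees $0$ and $1$). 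Then the argument of Lemma \ref{lem:discrete fg} applies verbatim: the graph $G_{i,n-i}$ with $i$ isolated edges and $n-i$ isolated vertices maps onto $C_i^{CW}(D_n(G))$ via the action of $\GM$, since every open $i$-cell of $D_n(G)$ is a product of $i$ distinct edges and $n-i$ distinct vertices, all of which is in the image of an embedding $G_{i,n-i} \hookrightarrow G$ followed by nothing—so in fact the $\GE$-generation already suffices and a fortiori the $\GM$-module $C_i^{CW}(D_n(\bullet))$ is finitely generated, being generated by the single object $G_{i,n-i}$ with $C_i^{CW}(D_n(G_{i,n-i})) \cong \mathbb{Z}^{i!}$.

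Second, I would invoke Conjecture \ref{conjecture:cgmt}, which asserts $\GM$ is Noetherian over any Noetherian ring (in particular over $\mathbb{Z}$): as a subquotient of the finitely generated $\GM$-module $C_i^{CW}(D_n(\bullet))$, the homology $H_i(D_n(\bullet))$ is finitely generated over $\GM$. It remains to pass from $D_n$ to $F_n$. Here the subtlety is that $D_n(G) \simeq F_n(G)$ only when $G$ is sufficiently subdivided for $n$, and subdivision is a topological minor operation, not a minor operation in $\GM$—so unlike in Theorem \ref{thm:truncated fg} I cannot freely subdivide within the category. The resolution, which is exactly the point handled in \cite{MPR}, is that the comparison map $H_i(D_n(\bullet)) \to H_i(F_n(\bullet))$ is a map of $\GM$-modules which becomes an isomorphism on sufficiently subdivided graphs, and since every graph admits a subdivision that is sufficiently subdivided, the submodule of $H_i(F_n(\bullet))$ generated by sufficiently subdivided graphs is all of $H_i(F_n(\bullet))$; combining with finite generation of $H_i(D_n(\bullet))$ and Lemma \ref{lem:object finite generation} (each $H_i(F_n(G))$ being a finitely generated abelian group) yields the conclusion.

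The main obstacle I anticipate is the last step—reconciling the fact that the Abrams model only computes $H_i(F_n(-))$ after subdivision with the fact that $\GM$ does not contain subdivisions. One must argue that finitely many generating objects for $H_i(D_n(\bullet))$, once replaced by suitable subdivisions and their images in $F_n$, still generate $H_i(F_n(\bullet))$ over $\GM$; this requires knowing that any minor morphism into an arbitrary $G$ can be compared, after choosing a sufficiently subdivided $G'$ mapping to $G$ topologically, with a minor morphism into $G'$, compatibly with the homology of configuration spaces. This is where the extended-functoriality bookkeeping of \cite{MPR} does the real work, and I would lean on it rather than reconstruct it.
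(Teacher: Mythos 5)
The paper offers no proof of this proposition at all: it is quoted verbatim as a result of \cite{MPR}, so your instinct to cite rather than reprove is exactly what the authors do, and your treatment of the first assertion (extended functoriality for contractions, due to \cite{ADK} and packaged categorically in \cite{MPR}) matches the paper's.

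Your attempted reconstruction of the second assertion, however, contains a genuine gap, and it is precisely the one you flag at the end. Two steps of the Abrams-model route do not go through over $\GM$. First, the chain-level input is missing: the paper only establishes a $\GTM$-module structure on $C_*^{CW}(\bullet)$ (Example \ref{example:cellular chains}), and there is no natural $\GM$-structure on $C_*^{CW}(D_n(\bullet))$ --- a minor morphism collapses a subtree to a vertex, and there is no canonical cellular map of discretized configuration spaces realizing this, so ``the argument of Lemma \ref{lem:discrete fg} applies verbatim'' is not available. Second, the passage from $H_i(D_n(\bullet))$ to $H_i(F_n(\bullet))$ in the proof of Theorem \ref{thm:truncated fg} hinges on choosing minimal simplicial representatives $\widetilde G_j$ and showing that a subdivision $G\to G'$ induces a surjection $\GTM_k(\widetilde G_j,G)\to\GTM_k(\widetilde G_j,G')$; this uses that edges of $\widetilde G_j$ may be sent to arbitrary paths, which is a feature of topological minor morphisms with no analogue for the hom-sets of $\GM$. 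The actual argument of \cite{MPR} avoids both problems by working with the \'{S}wi\k{a}tkowski/ADK cube complex --- the functor $K_n(\bullet)$ of Theorem \ref{thm:swiatkowski} in this paper --- which computes $H_i(F_n(G))$ for \emph{every} $G$ with no subdivision hypothesis and whose cellular chains carry the minor functoriality and are visibly finitely generated (generated by graphs with boundedly many vertices, exactly as in the proof of Theorem \ref{thm:cograph ufg}). If you want a self-contained sketch rather than a bare citation, that is the model to use; the Abrams model is the wrong tool here.
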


\begin{corollary}\label{cor:finite filtrations}
If Conjecture \ref{conjecture:cgmt} holds, then the Betti filtration is finite.
\end{corollary}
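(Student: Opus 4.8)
The goal is to deduce finiteness of the Betti filtration $\{B_gH_i(F_n(\bullet))\}_{g\geq0}$ from Conjecture \ref{conjecture:cgmt}. The plan is to exploit the extended functoriality of Proposition \ref{prop:contractions}: under the categorical graph minor conjecture, the $\GM$-module $H_i(F_n(\bullet))$ is finitely generated, hence (by Lemma \ref{lem:object finite generation}, noting that each $H_i(F_n(G))$ is a finitely generated abelian group) generated by a finite set $\{G_1,\dots,G_r\}$ of graphs. Set $g_0 = \max_j b_1(G_j)$. I claim $B_{g_0}H_i(F_n(\bullet)) = H_i(F_n(\bullet))$, which forces the filtration to stabilize at stage $g_0$ and hence to be finite (each stage being a submodule of the next by Lemma \ref{lem:filtration inclusions}(2)).

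To prove the claim, fix a graph $G$ and a class $\alpha \in H_i(F_n(G))$. Since the $G_j$ generate $H_i(F_n(\bullet))$ as a $\GM$-module, $\alpha$ is an $A$-linear combination of classes of the form $\phi_*(\beta)$ where $\phi: G_j \to G$ is a minor morphism and $\beta \in H_i(F_n(G_j))$. The essential point is that a minor morphism $\phi: G_j \to G$ is non-increasing on first Betti number: deleting edges and contracting edges each can only decrease (or preserve) $b_1$. Wait — contracting an edge preserves $b_1$ and deleting can only decrease it, so $b_1(\text{image}) \le b_1(G_j)$ is the wrong direction; what we actually need is that the class $\phi_*(\beta)$ is supported on a subgraph of $G$ of controlled Betti number. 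The right formulation: a minor morphism $G_j \to G$ factors through a subgraph $H \subseteq G$ admitting $G_j$ as a classical minor (via contractions only along a forest), and such $H$ satisfies $b_1(H) \le b_1(G_j) \le g_0$. Then $\phi_*(\beta)$ lies in the image of $H_i(F_n(H)) \to H_i(F_n(G))$ under the embedding $H \hookrightarrow G$, so $\alpha \in B_{g_0}H_i(F_n(G))$ by definition of the Betti stage.

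I would carry this out in the following order. First, recall the structure of minor morphisms in $\GM$ (from \cite{MPR}): each factors as a composite of an embedding-type morphism followed by a sequence of edge contractions, equivalently, a minor morphism $G_j \to G$ corresponds to a subgraph $H \subseteq G$ together with a partition of $H$ into connected subgraphs contracting onto $G_j$. Second, observe that under edge contraction along a subforest of $H$, the quantity $b_1$ is unchanged, so $b_1(G_j) = b_1(H)$ — in particular $b_1(H) \le g_0$. Third, factor $\phi_*$ as the contraction-induced map $H_i(F_n(G_j)) \to$ (something on $H$) — but more cleanly, since the Betti stage is defined as the $\GTM$-submodule generated by graphs of small $b_1$ and all our morphisms restrict from $\GM$ and land correctly, use that $\phi$ already witnesses $\alpha$ as coming from a graph ($G_j$ itself, or $H$) of first Betti number at most $g_0$. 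Fourth, conclude $H_i(F_n(\bullet)) = B_{g_0}H_i(F_n(\bullet))$ and hence the filtration is constant from stage $g_0$ on.

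The main obstacle is reconciling the $\GM$-module generation (which involves contraction morphisms, not present in $\GTM$) with the definition of $B_g$ as a $\GTM$-submodule generated by small-$b_1$ graphs: one must check that the image of the generator $G_j$ under an arbitrary minor morphism is still captured by the Betti stage, which requires knowing that contractions do not raise $b_1$ and that the relevant classes factor through an actual topological subgraph of the target. This is where the precise description of morphisms in $\GM$ from \cite{MPR} and the behavior of $b_1$ under the two elementary operations (deletion: non-increasing; contraction: invariant) must be invoked carefully; everything else is formal.
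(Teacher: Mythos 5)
Your argument is correct and is essentially the paper's own proof: both use Proposition \ref{prop:contractions} to get finitely many $\GM$-generators $G_1,\dots,G_r$, factor each minor morphism $G_j\to G$ as a forest-contraction preimage $G_j\to H$ followed by an embedding $H\hookrightarrow G$ with $b_1(H)=b_1(G_j)$, and conclude that everything lands in $B_{g_0}$ for $g_0=\max_j b_1(G_j)$. The mid-proof digression about the direction of the Betti inequality resolves itself correctly (contraction along a forest preserves $b_1$ exactly), so no gap remains.
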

\begin{proof}
Assuming Conjecture \ref{conjecture:cgmt}, Proposition \ref{prop:contractions} guarantees a $\GM$-module surjection of the form
\[\bigoplus_{j=1}^r\GM(G_j,\bullet)\otimes H_i(F_n(G_j))\to H_i(F_n(\bullet)).\] But every morphism $G_j\to G$ in $\GM$ factors through a morphism $G_j'\to G$ in $\GE$, where $G_j'$ has first Betti number equal to that of $G_j$. Indeed, this follows from the fact that the morphisms of $\GM$ are comprised of edge deletions and contractions, and these operations commute with one another. Thus, the image of the surjection in question lies in the $g$th stage of the Betti filtration, where $g$ is the maximum of the first Betti numbers of the $G_j$. 
\end{proof}

\begin{proof}[Proof of Theorem \ref{thm:main}]
Assuming Conjecture \ref{conjecture:cgmt}, we invoke Corollary \ref{cor:finite filtrations} to conclude the existence of $g_0\geq0$ such that $B_gH_i(F_n(\bullet))=H_i(F_n(\bullet))$ for $g> g_0$. From Lemma \ref{lem:filtration inclusions}, we have the inclusions \[B_{k-1}H_i(F_n(\bullet))\subseteq R_kH_i(F_n(\bullet))\subseteq  H_i(F_n(\bullet)),\] so $R_kH_i(F_n(\bullet))=H_i(F_n(\bullet))$ for $k\geq g_0$. By Lemma \ref{lem:robertson image}, this $\GTM$-module receives a surjection from the module $(\iota_k)_!\iota_k^*H_i(F_n(\bullet))$, which is finitely generated by Lemma \ref{lem:kan extension fg} and Theorem \ref{thm:truncated fg}---here we use the assumed validity of Conjecture \ref{conjecture:crc}. We conclude that $H_i(F_n(\bullet))$ is finitely generated as a $\GTM$-module. The claim now follows from Lemma \ref{lem:full and essentially surjective}, since the functor $\GTM\to \TGI$ is essentially surjective; indeed, every topological graph is homeomorphic to a simple graph.
\end{proof}

\section{Cographs}

As shown above in Theorem \ref{thm:genus one}, the simplest case of the categorical Robertson conjecture holds. In this section, we explore the consequences of this fact for a certain class of graphs.

\begin{definition}\label{def:cograph}
The class of \emph{cographs} is the smallest class of simple graphs satisfying the following three properties:
\begin{enumerate}
\item the isolated vertex is a cograph;
\item a disjoint union of cographs is a cograph; and
\item if $G$ is a cograph, then so is the graph with vertices $V_G$ and edges the non-edges of $G$.
\end{enumerate}
\end{definition}

We write $\CFE\subseteq \GE$ for the subcategory of cographs and full embeddings. Note that this category is neither full nor wide.

\begin{theorem}\label{thm:cographs}
The category $\CFE$ is Noetherian over any Noetherian ring.
\end{theorem}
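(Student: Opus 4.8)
The plan is to construct a Gr\"{o}bner cover of $\CFE$ in the sense of Definition~\ref{def:grobner} and then invoke Theorem~\ref{thm:samsnowden}. The cover will come from a rigidified category of cotrees, and its Gr\"{o}bner property will be imported from the categorical form of Kruskal's tree theorem (\cite{Barter,PR-genus})---the same circle of ideas that, in the guise of $\mathcal{PGTM}_{1,Q}$, underlies Theorem~\ref{thm:genus one}.

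Recall that, up to isomorphism, a cograph $G$ is determined by its reduced \emph{cotree} $T_G$: a rooted tree whose leaves are the vertices of $G$ and whose internal nodes carry a label in $\{\sqcup,\ast\}$ (``union'' or ``join''), subject to the conditions that every internal node has at least two children and no internal node repeats the label of its parent; two vertices of $G$ are adjacent precisely when their lowest common ancestor in $T_G$ is labeled $\ast$. I would let $\cC$ be the category whose objects are reduced cotrees equipped with a \emph{planar structure}---a choice of linear order on the children of each node, equivalently a linear order on the leaves for which the descendant leaves of every node form an interval---and whose morphisms are the full embeddings of the associated cographs that respect these orders. There is an evident forgetful functor $\Phi\colon\cC\to\CFE$; it is essentially surjective since every cotree admits a planar structure, and $\cC$ is directed because an order-preserving full self-embedding of a finite linearly ordered cograph must be the identity on vertices, hence the identity. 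For condition~(F), fix a cograph $G$; an object of $(G\downarrow\Phi)$ is a full embedding $f\colon G\hookrightarrow H$ together with a planar structure on $T_H$. Restricting the resulting order on $V_H$ to $f(V_G)$ yields an order in which the descendant leaves of each node of $T_G$---which are exactly the leaves in $f(V_G)$ lying below the corresponding node (or contracted chain of nodes) of $T_H$, hence an interval---still form intervals, so this is a planar structure on $T_G$ making $f$ order-preserving. Thus the finitely many planar structures on $T_G$, each paired with $\id_G$, constitute a finite weakly initial subset of $(G\downarrow\Phi)$.

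It remains to show that $\cC$ is Gr\"{o}bner, which is the heart of the matter. I would establish the dictionary between order-preserving full embeddings of cographs and \emph{minor morphisms} of their planar cotrees: a full embedding $G\hookrightarrow H$ with image on the leaf set $W$ corresponds to the composite of (i) passing to the subtree of $T_H$ below $\mathrm{LCA}(W)$, (ii) deleting the leaves outside $W$ together with the dead internal nodes, (iii) suppressing the resulting degree-two nodes, and (iv) merging adjacent same-label nodes---a chain of deletions and edge-contractions whose source is, by construction, the reduced cotree $T_G$. One checks that this assignment is a bijection onto all planar, label-compatible minor morphisms $T_G\to T_H$ (uniqueness using reducedness of both source and target) and that it is functorial. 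This identifies $\cC$ with a full subcategory of the category $\mathcal{PT}_Q$ of planar rooted trees labeled by the finite---hence well-quasi-ordered---set $Q=\{\sqcup,\ast,\mathrm{leaf}\}$ with minor morphisms; that $\mathcal{PT}_Q$ is Gr\"{o}bner is the categorical form of Kruskal's tree theorem, available in \cite{Barter,PR-genus} just as the Gr\"{o}bner property of $\mathcal{PGTM}_{1,Q}$ is used in the proof of Theorem~\ref{thm:genus one}. Since a subset of a well-ordered set is well-ordered and a sub-poset with the induced order of a Noetherian poset is Noetherian, conditions (G1) and (G2) pass to full subcategories, so $\cC$ is Gr\"{o}bner as well. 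Therefore $\Phi$ is a Gr\"{o}bner cover, and Theorem~\ref{thm:samsnowden} gives the conclusion.

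I expect the main obstacle to lie in the third paragraph: making the cotree dictionary precise and, in particular, matching the admissible morphisms of $\mathcal{PT}_Q$ with the morphism class actually treated in \cite{Barter,PR-genus}---which is phrased there in terms of contractions, possibly after passing to the opposite category---together with the attendant bookkeeping, namely that the image of a cotree's root is in general an interior node of the target and that the reduction step~(iv) may route through non-reduced intermediates, so that one must check that the reduced cotrees really do span a full subcategory closed under composition. By contrast, the directedness of $\cC$ and the verification of condition~(F) should be routine.
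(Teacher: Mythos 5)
Your strategy is the same one the paper uses: translate cographs into (reduced) cotrees and import Noetherianity from the categorical Kruskal theorem of Barter and Proudfoot--Ramos. The organizational difference is that you build a Gr\"{o}bner cover of $\CFE$ directly by planar cotrees, whereas the paper first defines an abstract category of cotrees as a full subcategory of $Q$-labeled trees with $Q=\{0,1,L\}$, deduces its Noetherianity from Theorem \ref{thm:genus one} together with \cite[Prop.~4.4.2]{sam}, and then asserts an equivalence $\CFE\simeq(\text{cotrees})$, citing the literature and outlining only the object-level construction. Your route buys a self-contained verification of conditions (G1), (G2), and (F), at the cost of redoing the planar bookkeeping that Theorem \ref{thm:genus one} already encapsulates; either way, the entire weight rests on the morphism-level cograph--cotree dictionary, which you correctly identify as the crux and which the paper also does not spell out.

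That dictionary is more delicate than your third paragraph suggests, and the issue is not only the reduction step (iv) or the location of the root. As stated, ``planar, label-compatible minor morphisms'' between reduced cotrees are strictly more plentiful than full embeddings of the underlying cographs, because a topological minor morphism need not respect the ancestor order. Concretely, let $T_G$ be the cotree of $\completegraph{2}$ (a join-labeled root $r$ with leaf children $u<v$) and $T_H$ the cotree of $\completegraph{2}\sqcup\completegraph{2}$ (a union-labeled root $s$ with join-labeled children $a,b$ carrying leaves $\ell_1<\ell_2$ and $\ell_3<\ell_4$). Sending $r\mapsto a$, $u\mapsto\ell_1$, $v\mapsto\ell_3$, the edge $ru$ to the path $(a,\ell_1)$, and the edge $rv$ to the path $(a,s,b,\ell_3)$ satisfies all four conditions in the definition of a topological minor morphism, preserves labels, and preserves the leaf order---yet the induced vertex map sends adjacent vertices to non-adjacent ones. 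So with this morphism class your $\cC$ is not a \emph{full} subcategory of $\mathcal{PT}_Q$, and fullness is exactly what you need: for a non-full subcategory the induced order on $\cC_c$ is coarser than that on $(\mathcal{PT}_Q)_c$, and Noetherianity of posets does not descend along coarsenings. The repair is to require morphisms of (planar) cotrees to preserve the tree partial order induced by the roots (though not the roots themselves), as the morphisms of $\mathcal{PGTM}_{1,Q}$ do; with that condition the bijection with full embeddings does hold, but you must then check that the Gr\"{o}bner property established in \cite{Barter,PR-genus} applies to precisely this morphism class. Until that is pinned down, the third paragraph remains a genuine gap rather than routine bookkeeping.
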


To prove this result, we employ a standard maneuver relating cographs to another type of graph called a cotree. The reader is warned that a cotree is not a cograph that happens to be a tree. In the following definitions, we write $Q=\{0,1,L\}$, regarded as a discrete quasi-order.

\begin{definition}
A \textbf{cotree} is a $Q$-labeled, rooted tree that is either a singleton labeled by $L$ or else satisfies the following conditions:
\begin{enumerate}
\item the root\footnote{For the purposes of this definition, the root is regarded as an internal vertex and not a leaf.} is labeled by 0 or 1;
\item non-leaf children of vertices labeled by 0 are labeled by 1, and vice versa;
\item leaves are labeled by $L$; and
\item internal vertices have at least two children.
\end{enumerate}
A morphism of cotrees is a topological minor morphism preserving labels (but not necessarily roots). Observe that, because our well-quasi-order is discrete, requiring that our maps preserve the labels is equivalent to those maps being compatible with the labels. In particular, the category of cotrees is a full subcategory of the category of trees with labels in $Q$.
\end{definition}

\begin{proof}[Proof of Theorem \ref{thm:cographs}]
The category of cotrees is Noetherian over any Noetherian ring by Theorem \ref{thm:genus one} and \cite[Prop. 4.4.2]{sam}. Thus, it suffices to show that $\CFE$ is equivalent to the category of cotrees. This fact is essentially standard \cite{D}, but we include an outline of proof at the level of objects for the reader's convenience. 

Given a cotree $T$ with root $v$, we construct a cograph $G_{(T,v)}$ recursively as follows. 
\begin{enumerate}
\item If $(T,v)$ is a singleton, then so is $G_{(T,v)}$.
\item If $v$ is labeled by $0$, then $G_{(T,v)}$ is the disjoint union of the cographs arising from the children of $v$.
\item If $v$ is labeled by $1$, then $G_{(T,v)}$ is the complement of the disjoint union of the complements of the cographs arising from the children of $v$.
\end{enumerate}

Because $G_{(T,v)}$ is formed by a sequence of disjoint unions and complements, it is a cograph. The construction going the other way is more subtle, relying on the fact that every cograph may be constructed uniquely via an alternating sequence of such moves \cite{CLSB}.

%As for morphisms, if $\eta:G \hookrightarrow G'$ is an embedding of cographs, then one obtains a homeomorphic embedding of the associated cotrees $(T_G,v_G)$ and $(T_{G'},v_{G'})$ by forgetting any leafs of $(T_{G'},v_{G'})$ whose corresponding vertices of $G'$ are not in the image of $\eta$, and smoothing out any interior vertices who, as a result of this, no longer have two children. Conversely, if one has a map of cographs $(T,v) \rightarrow (T',v')$, then one obtains a map from the vertex set of $G_{(T,v)}$ to that of $G_{(T',v')}$ by looking at how the leafs are being mapped. To see why this map of vertex sets is actually an embedding of cographs, one notes the fact that two vertices of a cograph $G_{(T,v)}$ are adjacent if and only if the first common ancestor between the corresponding leafs of $(T,v)$ is labeled with a 1 \cite{D}. Because maps of cographs preserve labels by definition, we conclude that things are adjacent in $G_{(T,v)}$ if and only if their images are in $G_{(T',v')}$.

\end{proof}

Our next goal is to apply Theorem \ref{thm:cographs} to deduce a universal finite generation result. Unfortunately, because morphisms among cographs are full embeddings, the subdivisions necessary to apply Abrams' theorem as above are missing from the category of cographs. Fortunately, we may avail ourselves of a second cellular model due originally to \'{S}wi\k{a}tkowski.

\begin{construction}\label{construction:cells}
Fix a graph $G$. Denote the set of half-edges of $G$ by $H_G$ and the set of half-edges at the vertex $v$ by $H_G(v)$. Let $A_{i,n}(G)$ denote the set of functions \[\lambda:E_G\sqcup V_G\to \mathbb{Z}_{\geq0}\sqcup V_G\sqcup H_G\sqcup\{\varnothing\}\] satisfying the following conditions: 
\begin{enumerate}
\item $\lambda(E_G)\subseteq \mathbb{Z}_{\geq0}$;
\item $\lambda(v)\in \{\varnothing, v\}\sqcup H_G(v)$ for $v\in V_G$;
\item $\sum_{e\in E_G}\lambda(e)+|\lambda^{-1}(V_G)|+|\lambda^{-1}(H_G)|=n$; and
\item $|\lambda^{-1}(H_G)|=i$.
\end{enumerate}
Extension by the values $0$ and $\varnothing$ endows the assignment $G\mapsto A_{i,n}(G)$ with the structure of a set-valued $\GE$-module.
\end{construction}

The role of the set $A_{i,n}(G)$ is as the set of $i$-cells in a cubical complex with the homotopy type of $F_n(G)/\Sigma_n$.

\begin{theorem}[\cite{Sw-graphs, CL,ADK}]\label{thm:swiatkowski}
Fix $n\geq0$. There is a functor $K_n(\bullet)$ from $\GE$ to the category of equivariant cellular embeddings among cell complexes equipped with free cellular $\Sigma_n$-actions, together with canonical isomorphisms for $i\geq0$ of the following form:
\begin{enumerate}
\item $C_i^{CW}(K_n(\bullet)/\Sigma_n)\cong \mathbb{Z}\langle A_{i,n}(\bullet)\rangle$; and
\item $H_i(K_n(\bullet))\cong H_i(F_n(\bullet))$.
\end{enumerate}
\end{theorem}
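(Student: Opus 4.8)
The plan is to construct $K_n(\bullet)$ explicitly as a functor into cube complexes, read the isomorphism (1) off the construction, and prove (2) by comparing $K_n$ with Abrams' discretized model through discrete Morse theory.

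\emph{Construction and functoriality.} For a graph $G$, I would take $K_n(G)$ to be the cube complex whose $i$-cells are the pairs $(\lambda,\beta)$ in which $\lambda$ is a function as in Construction \ref{construction:cells} and $\beta$ is a bijection between $\{1,\dots,n\}$ and the set of particle slots recorded by $\lambda$---one slot for each unit of $\lambda(e)$ along an edge $e$, and one for each vertex $v$ with $\lambda(v)\neq\varnothing$; condition (3) of Construction \ref{construction:cells} guarantees there are exactly $n$ such slots. The dimension of $(\lambda,\beta)$ is $|\lambda^{-1}(H_G)|$, and its boundary is the standard cubical boundary obtained by resolving each half-edge particle in the two available ways: pushing it onto the adjacent edge (incrementing the corresponding value of $\lambda$) or settling it into the incident vertex, in each case retaining its label. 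The group $\Sigma_n$ acts by precomposition on $\beta$; since distinct particles occupy distinct slots the action is free on cells, and the quotient $K_n(G)/\Sigma_n$ has exactly $A_{i,n}(G)$ as its set of $i$-cells, which is (1). An embedding $\phi\colon G\hookrightarrow G'$ in $\GE$ transports the edge- and half-edge-data of a state along $\phi$ and fills in the new edges and vertices of $G'$ with the values $0$ and $\varnothing$---precisely the $\GE$-module structure on $A_{i,n}(\bullet)$---and this is a cellular embedding onto a subcomplex commuting with the $\Sigma_n$-actions. Hence $K_n$ is a functor of the asserted type.

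\emph{The homotopy equivalence.} For (2), fix $G$ and a subdivision $G^{(N)}$ that is sufficiently subdivided for $n$, so that Abrams' Theorem \ref{thm:abrams} gives a $\Sigma_n$-equivariant homotopy equivalence $D_n(G^{(N)})\simeq F_n(G)$. The geometric heart of Świątkowski's model is that $D_n(G^{(N)})$ collapses onto a subcomplex isomorphic to $K_n(G)$: a particle resting on a subdivision vertex in the interior of an original edge $e$ can be slid monotonically toward the nearer essential endpoint of $e$, and the only discrete residue of this sliding is the count ``how many particles currently lie along $e$'', which is the value $\lambda(e)$. I would make this precise by exhibiting an acyclic discrete Morse matching on $D_n(G^{(N)})$ that pairs each cell in which some particle occupies a non-canonical position with the cell obtained by sliding the highest-priority such particle one step toward its essential vertex; the critical cells then form a subcomplex isomorphic to $K_n(G)$, so the collapsing theorem of discrete Morse theory yields $D_n(G^{(N)})\simeq K_n(G)$ equivariantly, hence (2). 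Equivariance comes for free because the matching refers only to the underlying unlabeled configuration; as in \cite{CL,ADK}, graphs with no essential vertex degenerate in the evident way and are treated directly.

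\emph{Naturality.} The subtle point---and the one I expect to be the main obstacle---is that (2) must be an isomorphism of $\GE$-modules, which forces the equivalences above to be compatible with every embedding. An embedding $G\hookrightarrow G'$ can promote inessential vertices of $G$ to essential vertices of $G'$ and need not send a subdivision sufficient for $G$ to one sufficient for $G'$, so a priori the Morse matchings on $D_n(G^{(N)})$ and $D_n((G')^{(N)})$ bear no relation. I would address this in two parts: first, show that the collapse is insensitive to the amount of subdivision, using the subdivisional formalism of \cite{ADK}, so that refining $N$ alters nothing up to canonical homotopy; second, choose the Morse matching to be \emph{local}, defined one original edge and one vertex at a time using only data visible there, so that it restricts correctly along the inclusion into $D_n((G')^{(N)})$ of the subcomplex coming from $G$. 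With these in hand, the zigzag $F_n(\bullet)\simeq D_n((\bullet)^{(N)})\simeq K_n(\bullet)$ becomes natural in $\GE$, and passing to homology delivers the $\GE$-module isomorphism $H_i(K_n(\bullet))\cong H_i(F_n(\bullet))$.
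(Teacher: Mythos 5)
The paper does not prove this theorem; it is imported wholesale from \cite{Sw-graphs}, \cite{CL}, and \cite{ADK}, so there is no internal proof to compare against. Your reconstruction of the complex itself is essentially right: cells are states $\lambda$ decorated by a labeling $\beta$ of the $n$ particle slots, the differential resolves each half-edge particle onto the adjacent edge or into the vertex, $\Sigma_n$ permutes labels freely, and extension by $0$ and $\varnothing$ gives the claimed functoriality and the isomorphism (1). Your route to (2), however, is genuinely different from the cited sources: Świątkowski constructs a deformation retraction of the (unordered) configuration space onto his complex directly, and \cite{ADK} proves the ordered statement by exhibiting both $F_n(\bullet)$ and $K_n(\bullet)$ as homotopy colimits over a category of subdivided local pieces---machinery built precisely to make the equivalence natural. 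Your proposal instead passes through Abrams' model and a discrete Morse collapse.

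That choice is where the gap sits, and you have correctly located it yourself without closing it. First, a local technical point: an acyclic matching on $D_n(G^{(N)})$ generally yields a Morse complex whose attaching maps are governed by gradient paths, not a literal subcomplex; to get ``$D_n(G^{(N)})$ collapses onto a copy of $K_n(G)$'' you must exhibit an explicit embedding of $K_n(G)$ as a subcomplex (fixing positions of edge-particles at chosen subdivision vertices and half-edge particles on the first subdivision edge) and check that your matching is a sequence of elementary collapses onto exactly that subcomplex. Second, and more seriously, the naturality step is deferred to two sub-claims (``the collapse is insensitive to subdivision'' and ``the matching is local'') that are not proved and are not routine: a morphism in $\GE$ changes vertex degrees and does not respect any uniform choice of sufficient subdivision, so the homotopies realizing the two collapses need not commute with the induced maps on the nose, only up to coherent homotopy---which is exactly the bookkeeping the subdivisional formalism of \cite{ADK} exists to manage. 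As written, the argument establishes an abstract isomorphism $H_i(K_n(G))\cong H_i(F_n(G))$ for each $G$ but not an isomorphism of $\GE$-modules, and the latter is what every downstream use in this paper (in particular the proof of Theorem \ref{thm:cograph ufg}) requires.
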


We now pair this model with Theorem \ref{thm:cographs} to establish universal finite generation for the class of cographs.

\begin{proof}[Proof of Theorem \ref{thm:cograph ufg}]
We will show that $H_i(F_n(\bullet))$ is finitely generated as a $\CFE$-module, where the module structure is given by restriction along the composite functor \[\CFE\to \GE\to \GTM.\] By Theorems \ref{thm:cographs} and \ref{thm:swiatkowski}, it suffices to establish the claim instead for $C_i^\mathrm{CW}(K_n(\bullet))$. By equivariance, it suffices to establish the claim instead for \[C_i^{CW}(K_n(\bullet))/\Sigma_n\cong C_i^{CW}(K_n(\bullet)/\Sigma_n)\cong \mathbb{Z}\langle A_{i,n}(\bullet)\rangle.\] To this end, fix a cograph $G$. Given a basis element $\lambda\in A_{i,n}(G)$, write $G_\lambda\subseteq G$ for the induced subgraph containing all vertices of the following three types:
\begin{enumerate}
\item members of $\mathrm{im}(\lambda)\cap V_G$;
\item endpoints of edges associated to members of $\mathrm{im}(\lambda)\cap H_G$; and
\item endpoints of members of $\lambda^{-1}(\mathbb{Z}_{>0})$.
\end{enumerate}
Since it is induced, the subgraph $G_\lambda$ is again a cograph, and it is easy to see that the basis element $\lambda$ lies in the image of the homomorphism induced by the inclusion of $G_\lambda$. Moreover, we have that
\begin{align*}
|V_{G_\lambda}|&\leq |\mathrm{im}(\lambda)\cap V_G|+2|\mathrm{im}(\lambda)\cap H_G|+2\sum_{e\in E_G}\lambda(e)\\
&=|\lambda^{-1}(V_G)|+2|\lambda^{-1}(H_G)|+2\sum_{e\in E_G}\lambda(e)\\
&=n+|\lambda^{-1}(H_G)|+\sum_{e\in E_G}\lambda(e)\\
&=n+i+\sum_{e\in E_G}\lambda(e)\\
&\leq 2n,
\end{align*}
where we have used that $\lambda$ is injective on $V_G$ away from the preimage of $\varnothing$, and that $\sum_{e\in E_G}\lambda(e)\leq n-i$, both readily seen from Construction \ref{construction:cells}. We conclude that the $\CFE$-module $\mathbb{Z}\langle A_{i,n}(\bullet)\rangle$ is generated by the collection of cographs with at most $2n$ vertices, which is finite.
\end{proof}

\bibliography{ufg}
\bibliographystyle{amsalpha}

\end{document}